\xpatchcmd{\proof}{\itshape}{\bfseries}{}{}
\newtheorem{theorem}{Theorem}
\newtheorem{proposition}{Proposition}
\newtheorem{corollary}{Corollary}
\newtheorem{lemma}{Lemma}
\newtheorem{conjecture}{Conjecture}
\theoremstyle{remark}
\newcommand{\C}{\mathbb{C}}
\newcommand{\zb}{\overline{z}}
\newcommand{\wb}{\overline{w}}
\title{Fixed points of the Berezin transform on Fock-type spaces}
\author{Ghazaleh Asghari}
\address[Ghazaleh Asghari]{Department of Mathematics and Statistics, 
University of Reading, England}
\email{g.asgharikhonakdari@pgr.reading.ac.uk}
\author{\v{Z}eljko \v{C}u\v{c}kovi\'c}
\address[\v{Z}eljko \v{C}u\v{c}kovi\'c]{University of Toledo, Department of 
Mathematics \& Statistics, Toledo, OH 43606, USA}
\email{Zeljko.Cuckovic@utoledo.edu}
\author{S\"{o}nmez \c{S}ahuto\u{g}lu}
\address[S\"{o}nmez \c{S}ahuto\u{g}lu]{University of Toledo, Department of 	
Mathematics \& Statistics, Toledo, OH 43606, USA}
\email{Sonmez.Sahutoglu@utoledo.edu}
\subjclass[2020]{Primary  32A36; Secondary 47B35}
\keywords{Berezin transform, fixed point, Fock space}
\thanks{G. Asghari was supported by EPSRC grant EP/W524128/1.}
\date{\today}
\begin{document}
	
\begin{abstract}
We study the fixed points of the Berezin transform on the Fock-type 
spaces $F_m^2$ with the weight $e^{-|z|^m}, m > 0.$  It is known that the 
Berezin transform is well-defined on the polynomials in $z$ and $\overline{z}$.  
In this paper we focus on the polynomial fixed points and we show that these 
polynomials must be harmonic, except possibly for countably many 
$m \in (0, \infty).$  We also show that, in some particular cases, the fixed 
point polynomials are harmonic for all $m.$
\end{abstract} 
\maketitle

The study of operators on the Bergman spaces is closely related 
to the properties of the Berezin transform $B$. This connection 
comes in a natural way through the use of the Bergman kernel, 
which is an essential object in the study of Bergman spaces on 
different domains. The Berezin transform is also an interesting 
object in its own right. A lot of work has been done in studying 
the regularity properties of the Berezin transform 
(see \cite{ArazyEnglis01,Coburn05,Englis07,CuckovicSahutoglu21}), 
as well as studying its range 
(see \cite{Ahern2004,CuckovicLi2012,Rao2018,CowenFelder2022}).

In this paper we are interested in finding the fixed points of the Berezin 
transform. Characterizing functions that satisfy the 
equality $Bf=f$ is an interesting and deep problem. The known 
results show the connection of these functions and different 
notions of harmonicity. 
One of the first work in this direction was the paper by 
Axler and \v{C}u\v{c}kovi\'c \cite{AxlerCuckovic1991}
who studied this problem in connection with the problem of 
commuting Toeplitz operators on the Bergman space on the unit disk. 
Engli\v{s} \cite{Englis1994} showed that on the unit disk, bounded fixed points 
are precisely harmonic functions. Several authors have continued this 
line of investigation, we mention \cite{AhernFloresRudin1993}, Lee 
\cite{Lee2008,Lee2017}, Arazy and Engli{\v{s}} \cite{ArazyEnglis01}, 
Jevti\'c \cite{Jevtic2003}, and Casseli \cite{Casseli2020}, among others.

On the Fock space $F^{2}_{2}$, the situation is different.  
Engli\v{s} \cite{EnglisThesis,Englis1994}  (see also \cite[section 3.3]{ZhuBookFock}) 
showed that there are non harmonic fixed points of the corresponding 
Berezin transform. For example $f(x+iy)=e^{ax+by}$ is fixed in $F^{2}_{2}$ 
for any $a,b\in\C$ such that $a^{2}+b^{2}=8\pi i$, however $f$ is not 
harmonic. On the other hand, if $f$ is a bounded fixed point of the 
Berezin transform on $F^{2}_{2}$, then it has to be a constant. 
In this paper we are interested in studying the fixed point problem 
on the family of Fock-type spaces $F^{2}_{m}$.

Let $m$ be a positive real number. Consider the space 
$L^{2}_{m}=L^{2}(\C,e^{-|z|^{m}}dA)$, where 
$dA=rdrd\theta/2\pi$ is a multiple of the Lebesgue 
measure on $\C$. $L^{2}_{m}$ is a Hilbert space with the inner product
\[\langle f,g\rangle=\int_{\C}f(z)\overline{g(z)}e^{-|z|^{m}}dA(z).\]
The Fock-type space $F^{2}_{m}$ is the closed subspace of entire 
functions inside $L^{2}_{m}$. It is a reproducing kernel Hilbert space 
with kernel, called the Bergman kernel, given by 
\begin{align*}
K_m(w,z)=m\sum_{k=0}^{\infty} \frac{w^{k}\zb^{k}}{\Gamma(\frac{2k+2}{m})}.
\end{align*}
Let $k_{m,z}=\frac{K_{m,z}}{\|K_{m,z}\|}$ be the normalized 
Bergman kernel, where $K_{m,z}(w)=K_m(w,z)$. One can define the Berezin 
transform of a function $f$ as
\begin{align*} 
B_{m}f(z)=\langle fk_{m,z},k_{m,z}\rangle
=\int_{\C} f(w) |k_{m,z}(w)|^{2}e^{-|w|^m}dA(w),
\end{align*}
whenever the above integral exists. We note that the estimate in the 
proof of  \cite[Lemma 5.2]{BHYoussfi2007} implies that the Berezin 
transform is defined for any polynomial in $z$ and $\zb$. For more 
information about Fock-type spaces, we refer the reader 
to \cite{BH2010,BHYZhu2017}. 

In view of the counterexample above, we will study the polynomials
in $z$ and $\zb$ that are fixed points of the Berezin transform on  
Fock-type spaces. First, we show that if a polynomial with non-negative 
coefficients is fixed, then it has to be harmonic.

%%%%%%%%%%%%%%%%%%%%%%%%%%%%%%%%%%%
\begin{theorem}\label{ThmPositive} 
Assume that $B_mf=f$ for a polynomial $f$ of $z$ and $\zb$  
with nonnegative coefficients and for some $m>0$. Then $f$ is harmonic.  
\end{theorem}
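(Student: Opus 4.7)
The idea is to use rotational symmetry of $B_m$ to reduce to one angular frequency at a time, compute $B_m(z^{n+k}\zb^k)$ explicitly as a ratio of entire functions of $|w|^2$, and then extract a single scalar identity whose positivity closes the argument.

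Because $K_m(e^{i\theta}z, e^{i\theta}w) = K_m(z,w)$, the Berezin transform commutes with the rotations $R_\theta:z\mapsto e^{i\theta}z$ and therefore preserves the angular-frequency decomposition $f = \sum_n f_n$, where $f_n\circ R_\theta = e^{in\theta}f_n$; hence each $f_n$ is itself a fixed point. For $n\geq 0$ write $f_n(z) = z^n p(|z|^2)$ with $p(t) = \sum_k a_k t^k$ and $a_k = c_{n+k,k}\geq 0$ (the case $n<0$ is symmetric after swapping $z\leftrightarrow\zb$); it suffices to prove that each such $p$ is constant.

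Expanding $|K_m(z,w)|^2$ as a double series in $z,\zb,w,\wb$ and integrating term by term using the orthogonality $\int_{\C} z^a\zb^b e^{-|z|^m}dA = \delta_{a,b}\,\Gamma((2a+2)/m)/m$, one obtains
\[
B_m(z^{n+k}\zb^k)(w) = \frac{w^n\,C_{n,k}(|w|^2)}{A(|w|^2)},
\]
where $A(t) = \sum_j t^j/\Gamma((2j+2)/m)$ (so $K_m(w,w) = m\,A(|w|^2)$) and
\[
C_{n,k}(t) = \sum_j \frac{t^j\,\Gamma((2n+2k+2j+2)/m)}{\Gamma((2j+2)/m)\,\Gamma((2n+2j+2)/m)},
\]
both of which are entire in $t$. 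Summing over $k$ and cancelling $w^n$, the identity $B_m f_n = f_n$ becomes the power-series equality $p(t)\,A(t) = \sum_k a_k\,C_{n,k}(t)$, valid on $[0,\infty)$ and hence for all $t\in\C$ by analyticity. Setting $t = 0$ gives $a_0\,\Gamma((2n+2)/m) = \sum_{k\geq 0}a_k\,\Gamma((2n+2k+2)/m)$, i.e.
\[
\sum_{k\geq 1} a_k\,\Gamma\!\big((2n+2k+2)/m\big) = 0.
\]
Non-negativity of the $a_k$ together with positivity of the Gamma values forces $a_k = 0$ for every $k\geq 1$, so $f_n(z) = a_0 z^n$ is holomorphic; the symmetric argument at negative frequencies yields antiholomorphic terms, and therefore $f$ is harmonic.

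The main obstacle is recognising that the positivity hypothesis isolates a single scalar consequence of the power-series identity as decisive. Matching the higher-degree $t^j$-coefficients of $p(t)A(t) = \sum_k a_k C_{n,k}(t)$ produces Gamma relations of mixed sign, which in general do admit non-harmonic solutions; this is consistent with the Engli\v{s} example of a non-harmonic fixed point on $F^2_2$ recalled in the introduction. Evaluation at $t = 0$ is the one point where non-negativity of the coefficients turns the resulting identity into a sum of strictly positive terms whose vanishing is forbidden, and locating this evaluation is the heart of the proof.
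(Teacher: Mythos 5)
Your argument is correct, and while the computational setup coincides with the paper's, the decisive step is genuinely different and lighter. Your rotation-invariance reduction to a single angular frequency plays the role of Lemmas \ref{LemAtau} and \ref{LemSum}, and your series for $B_m(z^{n+k}\zb^k)$ is exactly \eqref{EqnBerezinMonomial}: your $C_{n,k}$ has the coefficients $d_{l+\tau}d_l/d_{k+\tau+l}$ appearing there. Where you diverge is in how the nonnegativity of the coefficients is exploited. The paper compares \emph{every} coefficient of $K_m(z,z)\bigl(B_mf-f\bigr)(z)$ and invokes the convexity of $x\mapsto\beta(x,k-x)$ (Lemma \ref{LemBeta}) to show each bracket $d_{l+\tau}d_l/d_{j+\tau+l}-d_{l-j}$ is strictly positive, so that the nonnegative coefficients $a_j$ cannot produce any cancellation; you instead evaluate the identity $p(t)A(t)=\sum_k a_kC_{n,k}(t)$ only at $t=0$, i.e.\ you compare the coefficient of the purely holomorphic monomial $z^{\tau}$, where the resulting relation $\sum_{k\ge 1}a_k\,\Gamma\bigl((2n+2k+2)/m\bigr)=0$ has strictly positive Gamma weights and nonnegative $a_k$, so no beta-function lemma is needed at all. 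This is the same ``zeroth term is holomorphic'' device the paper itself uses later (Proposition \ref{PropMono} and the proof of Theorem \ref{ThmBino}), so your route is arguably the more elementary proof of this theorem, while the paper's argument buys the stronger fact that all coefficients of $K_m(z,z)\bigl(B_mf-f\bigr)(z)$ carry a fixed sign. One side remark in your closing paragraph is inaccurate, though it costs you nothing: the higher-order $t^j$ relations at fixed frequency $\tau\ge 0$ are not of mixed sign --- by Lemma \ref{LemBeta} they are single-signed as well --- and the Engli\v{s} example is not a polynomial, so it does not speak to them; your proof, which uses only the $t=0$ relation, is unaffected.
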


Next, we show that on $F^{2}_{2}$, if $B_{2}f=f$ and $f$ is a 
polynomial, then $f$ must be harmonic. 

%%%%%%%%%%%%%%%%%%%%%%%%%%%%
\begin{theorem}\label{ThmCase2}
Let $f$ be a polynomial of $z$ and $\zb$ such that $B_2f=f$. 
Then $f$ is harmonic.	
\end{theorem}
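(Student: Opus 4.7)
The guiding observation is that when $m=2$ the Berezin transform coincides with the two-dimensional heat semigroup, which turns the problem into a finite-dimensional algebraic one. Using $\Gamma(k+1)=k!$ the kernel simplifies to $K_2(w,z)=2e^{w\zb}$, so $|k_{2,z}(w)|^2 e^{-|w|^2}=2e^{-|w-z|^2}$. Since $dA(w)=dx\,dy/(2\pi)$, this gives
\[
B_2 f(z)=\frac{1}{\pi}\int_{\C}f(w)\,e^{-|w-z|^2}\,dx\,dy=(e^{\Delta/4}f)(z),
\]
where $\Delta=4\partial_z\partial_{\zb}$ is the Euclidean Laplacian, since $\tfrac{1}{\pi}e^{-|x|^2}$ is the heat kernel on the plane at time $t=1/4$.

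For a polynomial $f$ in $z$ and $\zb$, the series $e^{\Delta/4}f=\sum_{k\ge 0}\Delta^k f/(4^k\,k!)$ is a finite sum, so the fixed-point equation $B_2f=f$ is equivalent to the polynomial identity
\[
\sum_{k\ge 1}\frac{\Delta^k f}{4^k\,k!}=0.
\]
My plan is to write $f=\sum_{j=0}^{n} f_j$, where $f_j$ collects all monomials of total degree $j$ in $z$ and $\zb$, and then extract the degree-$d$ part of this identity for each $d$. Because $\Delta$ is homogeneous of degree $-2$, this yields
\[
\sum_{k\ge 1}\frac{\Delta^k f_{d+2k}}{4^k\,k!}=0 \qquad (d\ge 0).
\]

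From here a downward induction on $j$ shows $\Delta f_j=0$ for every $j$. The top cases $j=n$ and $j=n-1$ come from $d=n-2$ and $d=n-3$: all $k\ge 2$ contributions involve $f_i$ with $i>n$ and hence vanish, leaving $\Delta f_n=\Delta f_{n-1}=0$. For the inductive step with $j\le n-2$, the $d=j-2$ equation has $k=1$ term $\Delta f_j/4$, while every $k\ge 2$ term has the form $\Delta^k f_{j+2k-2}$ with index $>j$; by the inductive hypothesis $\Delta f_{j+2k-2}=0$, so these higher-order terms vanish and $\Delta f_j=0$ follows. Summing over $j$ gives $\Delta f=0$, i.e., $f$ is harmonic. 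The only real conceptual ingredient is the identification $B_2=e^{\Delta/4}$; once that is in hand, the remainder is a routine bookkeeping induction, and I do not foresee any substantive obstacle.
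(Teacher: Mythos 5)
Your argument is correct, but it is organized quite differently from the paper's. The paper first splits $f$ by the ``twist'' $\tau$ (Lemma \ref{LemSum}), reducing to the spaces $H_{n,\tau}$, then computes $B_2$ on the monomials $z^{j+\tau}\zb^j$ via the translation formula $B_2f(z)=2\int_\C f(z+\xi)e^{-|\xi|^2}dA(\xi)$ and binomial expansion (Lemma \ref{LemM=2}), obtaining the unipotent upper triangular matrix \eqref{EqnMatrix}; a rank--nullity argument then shows that the fixed vectors in $H_{n,\tau}$ are exactly the multiples of $z^\tau$. You instead package the same translation formula as the heat semigroup identity $B_2=e^{\Delta/4}$, decompose $f$ by total homogeneous degree rather than by $\tau$, and run a downward induction on the degree of the homogeneous pieces; the two computations are equivalent at bottom (the coefficients $\binom{j+\tau}{s+\tau}\binom{j}{s}(j-s)!$ in Lemma \ref{LemM=2} are precisely those produced by $\Delta^{j-s}/(4^{j-s}(j-s)!)$), but your route is shorter and more conceptual for $m=2$. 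Two small points: (i) the identity $B_2f=\sum_k\Delta^kf/(4^kk!)$ for polynomials is asserted rather than proved --- it follows by checking on monomials, which is exactly the binomial computation of Lemma \ref{LemM=2}, so a line of verification should be added; (ii) your inductive step uses the degree-$(j-2)$ equation and so formally needs $j\ge 2$; for $j\in\{0,1\}$ the homogeneous parts are harmonic trivially, which should be said. Finally, note what the paper's organization buys: the explicit rank computation $\mathrm{rank}(T_{2,n})=n$ on each $H_{n,\tau}$ is reused as the anchor for the analytic-perturbation argument in the proof of Theorem \ref{ThmPoly}, whereas your degree-wise induction, while cleaner for the $m=2$ statement itself, does not directly supply that input.
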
 

For $m\neq 2$, the situation is more difficult, since the Bergman  
kernels do not have a closed form and the computations involve 
many gamma functions. Our main result shows that $B_{m}f=f$ 
for a polynomial $f$ implies that $f$ is harmonic for all $m$, 
except possibly a countably many $m$. 
We denote the non-negative integers as $\mathbb{N}_0$.

%%%%%%%%%%%%%%%%%%%%%%%
\begin{theorem}\label{ThmPoly}
For $n\in\mathbb{N}_0$, there exists a discrete (possibly empty) 
set $Z_n\subset (0,\infty)$ with no cluster points in $(0,\infty)$ such that if 
$m\in (0,\infty)\setminus Z_n$ and $B_mf=f$ for a polynomial $f$ of 
degree at most $n$, then $f$ is harmonic. 
\end{theorem}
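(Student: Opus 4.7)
The plan is to recast $B_m f = f$ for a polynomial $f$ of degree at most $n$ as an infinite linear system in the coefficients $a_{jk}$, block-diagonalize it by the antidiagonal index $d = k-j$, and then deduce the theorem from Theorem~\ref{ThmCase2} by invoking real-analyticity in $m$.

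Expanding $K_m$ as a power series and integrating termwise in polar coordinates gives
\[
B_m(w^j\wb^k)(z)\,K_m(z,z)
\;=\; m\!\!\sum_{\substack{p-q=k-j\\ p,q\ge 0}}
\frac{\Gamma\!\left(\frac{p+q+j+k+2}{m}\right)}{\Gamma\!\left(\frac{2p+2}{m}\right)\Gamma\!\left(\frac{2q+2}{m}\right)}\,\zb^{\,p}z^{\,q}.
\]
Writing $f = \sum_{0\le j,k\le n} a_{jk} z^j \zb^k$ and equating the Taylor coefficients of $\zb^p z^q$ in the identity $f \cdot K_m = (B_m f) \cdot K_m$, the equation $B_m f = f$ decouples into one infinite linear subsystem for each $d \in \{-n,\ldots,n\}$, involving only the ``diagonal'' coefficients $b_j^{(d)} := a_{j,\,j+d}$ (and symmetrically when $d<0$), $0 \le j \le n - |d|$. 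The coefficient matrix $M_d(m)$ has entries that are rational in the values $\Gamma(\frac{2\ell+2}{m})$, hence real-analytic in $m \in (0,\infty)$. A short direct check using the identity above shows that $B_m(\zb^{|d|}) = \zb^{|d|}$ for every $m > 0$ (and symmetrically $B_m(z^{|d|}) = z^{|d|}$ for $d<0$), which means that the column of $M_d(m)$ corresponding to $b_0^{(d)}$ is identically zero and the vector $\vec e_{\,0} := (1,0,\ldots,0)$ always lies in $\ker M_d(m)$.

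Harmonicity of $f$ is precisely the statement that $\ker M_d(m) = \C\,\vec e_{\,0}$ for every $d$, equivalently, that the submatrix $M_d'(m)$ obtained by deleting the zero column has full column rank $n - |d|$. By Theorem~\ref{ThmCase2}, at $m = 2$ every polynomial fixed point of $B_2$ of degree at most $n$ is harmonic, so for each $d$ we have $\ker M_d(2) = \C\,\vec e_{\,0}$, and therefore some $(n-|d|)\times(n-|d|)$ minor $\Delta_d(m)$ of $M_d'(m)$ satisfies $\Delta_d(2) \neq 0$. Each $\Delta_d$ is real-analytic on the connected interval $(0,\infty)$ and not identically zero, so its zero set $Z^{(d)} \subset (0,\infty)$ is discrete with no cluster points in $(0,\infty)$. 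Setting $Z_n := \bigcup_{|d|\le n} Z^{(d)}$, a finite union of such sets, we obtain the theorem: for $m \in (0,\infty) \setminus Z_n$ every block $M_d'(m)$ has full column rank, forcing all non-harmonic coefficients of $f$ to vanish. The main work lies in the block decomposition and the observation that the first column of $M_d(m)$ is identically zero; after that, the analytic-continuation argument anchored at $m = 2$ is essentially automatic, and the only real obstacle is the bookkeeping required to single out an explicit minor $\Delta_d$ whose nonvanishing at $m=2$ is visibly certified by Theorem~\ref{ThmCase2}.
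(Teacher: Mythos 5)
Your proposal is correct and follows essentially the same route as the paper: decompose the fixed-point equation into diagonal blocks, multiply through by $K_m(z,z)$ to obtain an infinite matrix whose entries are built from values of the Gamma function, anchor nondegeneracy at $m=2$ via Theorem~\ref{ThmCase2}, and conclude from the discreteness of the zero set of a suitable minor, taking a finite union over the blocks. The only cosmetic difference is that you work with real-analyticity of the minors on $(0,\infty)$, whereas the paper complexifies $m$ and uses holomorphicity on the right half-plane; both yield the same discreteness conclusion.
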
 

Since harmonic polynomials are fixed points of Berezin transform 
(see Lemma \ref{LemHarmonic}) we have the following corollary. 
	
\begin{corollary}
Let $f$ be a polynomial of $z$ and $\zb$ of degree at most $n$.  
Then the following are equivalent. 
\begin{itemize}
\item[i.)] $B_mf=f$ for some $m\in (0,\infty)\setminus Z_n$, 
\item[ii.)]  $B_mf=f$ for all $m\in (0,\infty)\setminus Z_n$. 
\end{itemize} 
\end{corollary}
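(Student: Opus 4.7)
The plan is to deduce the corollary immediately by combining Theorem \ref{ThmPoly} with Lemma \ref{LemHarmonic}, without any further calculation. The content of Theorem \ref{ThmPoly} is that for $m$ outside the discrete exceptional set $Z_n$, being a fixed point of $B_m$ forces a polynomial of degree at most $n$ to be harmonic; Lemma \ref{LemHarmonic} provides the converse universal statement that every harmonic polynomial is a fixed point of $B_m$ for every $m>0$. These two facts together are precisely the two halves of the equivalence.

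The implication (ii) $\Rightarrow$ (i) is trivial, but one should briefly observe that $(0,\infty)\setminus Z_n$ is nonempty: since $Z_n$ is discrete with no cluster points in $(0,\infty)$, its complement in $(0,\infty)$ is in fact open and dense. Hence the quantifier ``for all $m\in(0,\infty)\setminus Z_n$'' ranges over a nonempty set, so (ii) really does imply (i).

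For (i) $\Rightarrow$ (ii), I would start with an $m_0\in (0,\infty)\setminus Z_n$ such that $B_{m_0}f=f$. Because $\deg f\le n$ and $m_0\notin Z_n$, Theorem \ref{ThmPoly} applies directly and shows that $f$ must be harmonic. Now invoke Lemma \ref{LemHarmonic}: since $f$ is a harmonic polynomial, $B_{m}f=f$ holds for every $m\in(0,\infty)$, and in particular for every $m\in(0,\infty)\setminus Z_n$. This establishes (ii).

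Because the corollary is purely a formal consequence of the two cited results, there is no real obstacle; the only thing worth being careful about is to cite Theorem \ref{ThmPoly} with the correct degree bound $n$ (the same $n$ that appears in the exceptional set $Z_n$), so that both directions of the equivalence use the \emph{same} exceptional set.
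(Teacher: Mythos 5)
Your proposal is correct and follows exactly the paper's route: the corollary is stated as an immediate consequence of Theorem \ref{ThmPoly} (fixed for some $m\notin Z_n$ implies harmonic) together with Lemma \ref{LemHarmonic} (harmonic polynomials are fixed for every $m$). The extra remark that $(0,\infty)\setminus Z_n$ is nonempty is a harmless additional observation, not a divergence from the paper's argument.
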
 
	
Since countable union of countable sets is countable, we have the 
following corollary.

\begin{corollary}\label{Cor2}
There exists a countable  (possibly empty) set $Z\subset (0,\infty)$ 
such that if $B_mf=f$ for a polynomial $f$ of $z$ and $\zb$ and 
$m\in (0,\infty)\setminus Z$, then $f$ is harmonic. 
\end{corollary}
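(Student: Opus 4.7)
The plan is to deduce Corollary \ref{Cor2} directly from Theorem \ref{ThmPoly} by taking the union of the exceptional sets over all possible degrees. The structural content of the corollary is really just a bookkeeping statement, so the only thing to verify carefully is that combining the countably many $Z_n$ still produces a countable exceptional set.

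First, I would record the following auxiliary observation: any subset of $(0,\infty)$ that is discrete and has no cluster point in $(0,\infty)$ is countable. This is because $(0,\infty)$ can be exhausted by the countable family of compact intervals $[1/k,k]$ for $k\geq 1$, and a discrete subset without cluster points in $(0,\infty)$ can meet each such compact interval in only finitely many points (otherwise a cluster point in $[1/k,k]\subset(0,\infty)$ would appear by Bolzano–Weierstrass). Hence each $Z_n$ provided by Theorem \ref{ThmPoly} is countable.

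Next, I would define
\[
Z=\bigcup_{n\in\mathbb{N}_0} Z_n.
\]
Being a countable union of countable sets, $Z$ is itself a countable subset of $(0,\infty)$. Now let $f$ be an arbitrary polynomial in $z$ and $\zb$, and let $n=\deg f$. If $B_m f=f$ for some $m\in(0,\infty)\setminus Z$, then in particular $m\notin Z_n$, so Theorem \ref{ThmPoly} applies to $f$ at this value of $m$ and yields that $f$ is harmonic.

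There is no real obstacle here; the only point worth being careful about is the elementary topological lemma that a discrete subset of $(0,\infty)$ with no cluster point in $(0,\infty)$ is countable, since the phrasing of Theorem \ref{ThmPoly} only rules out cluster points inside $(0,\infty)$ and a priori permits $0$ or $+\infty$ as limit points of $Z_n$. The compact exhaustion argument above handles this cleanly.
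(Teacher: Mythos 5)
Your proposal is correct and follows the same route as the paper: take $Z=\bigcup_{n\in\mathbb{N}_0}Z_n$ and apply Theorem \ref{ThmPoly} to the degree of $f$, the paper simply compressing this into the remark that a countable union of countable sets is countable. Your explicit verification via compact exhaustion that each discrete $Z_n$ without cluster points in $(0,\infty)$ is countable is exactly the implicit step the paper leaves to the reader.
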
 	

By using a more computational approach, we also show that if 
$B_{m}f=f$  for a binomial function $f$, then $f$ is harmonic. 

\begin{theorem}\label{ThmBino}
Let $m>0$ and $f(z)=c_{1}z^{a}\zb^{b}+c_{2}z^{c}\zb^{d}$, 
where $a$, $b$, $c$, and $d$ are  positive integers. 
Then $f$ is a fixed point of the Berezin transformation $B_{m}$ 
if and only if $c_{1}=c_{2}=0$.
\end{theorem}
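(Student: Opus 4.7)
The nontrivial direction is to show $B_m f = f$ forces $c_1=c_2=0$. The plan is to write $B_m f = f$ as a power-series identity in $t=|z|^2$, after peeling off the angular factor and the normalizing kernel, and then compare coefficients of $t^j$.

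First I would compute, by expanding $|K_m(w,z)|^2$ in its double series and integrating term-by-term in polar coordinates (only the diagonal angular terms survive), that for $p\geq q$,
\[
B_m(z^p\zb^q)(z)
= \frac{m\, z^{p-q}}{K_m(z,z)}\sum_{j=0}^{\infty}\frac{|z|^{2j}\,\Gamma(\lambda_{j+p})}{\Gamma(\lambda_j)\,\Gamma(\lambda_{j+p-q})},
\qquad \lambda_k:=\tfrac{2k+2}{m},
\]
with the analogous expression for $p<q$ by conjugation. Since $\overline{B_m f}=B_m\bar f$, we may assume $a\geq b$. If $a-b\neq c-d$, the two summands of $B_mf$ occupy distinct angular modes, so $B_mf=f$ decouples into the two single-monomial identities $B_m(z^a\zb^b)=z^a\zb^b$ and $B_m(z^c\zb^d)=z^c\zb^d$; each is ruled out directly from the formula, since the $t^0$-coefficient on the right is zero (the smaller of the two exponents being at least $1$) while on the left it is a nonzero ratio of $\Gamma$-values. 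Hence $c_1=c_2=0$.

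The essential case is $a-b=c-d=:r\geq 0$. Since $(a,b)\neq(c,d)$ we may assume $a>c$, and then $b>d\geq 1$. After clearing the common factor $K_m(z,z)\cdot z^r$ the identity becomes
\[
\sum_{j=0}^{\infty}t^j\,\frac{c_1\Gamma(\lambda_{j+a})+c_2\Gamma(\lambda_{j+c})}{\Gamma(\lambda_j)\,\Gamma(\lambda_{j+r})}
= c_1\sum_{j=b}^{\infty}\frac{t^j}{\Gamma(\lambda_{j-b})}+c_2\sum_{j=d}^{\infty}\frac{t^j}{\Gamma(\lambda_{j-d})}.
\]
Matching coefficients of $t^j$ for $0\leq j<d$ yields $c_1\Gamma(\lambda_{j+a})+c_2\Gamma(\lambda_{j+c})=0$. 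The main auxiliary device is the function $h(x):=\Gamma(x+2/m)/\Gamma(x)$, which is positive and strictly increasing on $(0,\infty)$, since $(\log h)'(x)=\psi(x+2/m)-\psi(x)>0$.

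If $d\geq 2$, combining the relations at $j=0$ and $j=1$ forces $h(\lambda_a)=h(\lambda_c)$, hence $a=c$ by monotonicity, a contradiction. If $d=1$, the only low-$j$ relation is $c_2=-c_1\Gamma(\lambda_a)/\Gamma(\lambda_c)$; I would then bring in the $t^1$ equation (at $j=1=d$, where only $c_2$ has activated on the right since $b\geq 2$) and use the collapse $\lambda_{r+1}=\lambda_c$ special to $d=1$ (because $r+1=c$) to reduce the identity, after substituting $c_2$ and dividing by $c_1$, to
\[
h(\lambda_a)-h(\lambda_c)=-h(\lambda_0).
\]
The left side is strictly positive (as $h$ is increasing and $\lambda_a>\lambda_c$) while the right side is strictly negative, a contradiction. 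The main obstacle is isolating a single identity that must fail for \emph{every} $m>0$; once one sees that both subcases are controlled by the one monotone function $h$, the argument becomes uniform in $m$ and closes.
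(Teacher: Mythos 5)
Your proposal is correct, and while its overall framework coincides with the paper's (multiply through by $K_m(z,z)$, use that the Berezin transform preserves the angular mode of $z^p\zb^q$ to decouple the case $a-b\neq c-d$ into single-monomial identities killed by the missing $t^0$-term, then compare power-series coefficients in $t=|z|^2$), the decisive step in the main case $a-b=c-d=r$ is genuinely different. The paper pairs the constant ($t^0$) equation with the equation at the $t$-degree just below where the monomial with the larger $\zb$-exponent activates, and shows the resulting $2\times 2$ determinant $A_m$ is nonzero by proving it is positive, using the convexity of $x\mapsto\beta(x,k-x)$ along $x+y=k$ (Lemma \ref{LemBeta}). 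You instead pair $t^0$ with $t^1$: for $d\geq 2$ these are the two uncontaminated relations $c_1\Gamma(\lambda_{j+a})+c_2\Gamma(\lambda_{j+c})=0$, $j=0,1$ (with $\lambda_k=(2k+2)/m$), which admit only the trivial solution because $h(x)=\Gamma(x+2/m)/\Gamma(x)$ is strictly increasing (the digamma function $\psi$ is increasing), while for $d=1$ the contaminated $t^1$ equation collapses, via $r+1=c$, to $h(\lambda_a)-h(\lambda_c)=-h(\lambda_0)$, which fails by sign since $\lambda_a>\lambda_c$ and $h>0$. Both arguments ultimately rest on log-convexity of $\Gamma$, but yours trades the paper's beta-function lemma and determinant computation for the more elementary digamma monotonicity, at the cost of the separate $d=1$ subcase, which the paper's single determinant absorbs automatically; I verified your $d=1$ reduction and it is exact. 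Two points to make explicit in a write-up: state the decoupled identities as $c_1B_m(z^a\zb^b)=c_1z^a\zb^b$ and $c_2B_m(z^c\zb^d)=c_2z^c\zb^d$, so that the impossibility of the monomial identity yields $c_1=c_2=0$; and note that, exactly as in the paper's proof (which assumes $a\neq c$ or $b\neq d$), your argument tacitly excludes the degenerate case $(a,b)=(c,d)$, where $f=(c_1+c_2)z^a\zb^b$ and fixedness only forces $c_1+c_2=0$.
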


It would be interesting to know if the set $Z$ in Corollary \ref{Cor2} 
can be non-empty. 
\begin{conjecture}
Let  $f$ be a polynomial of $z$ and $\zb$.  If for any $m>0$ we have $B_mf=f$, 
then $f$ is harmonic.
\end{conjecture}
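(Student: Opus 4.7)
The plan is to split the argument by whether the two monomials lie in the same Fourier component of the natural rotational symmetry of $B_m$. A change of variables in the defining integral, together with the rotational invariance of $K_m$, yields
\begin{equation*}
B_m(z^p\overline{z}^q)(e^{i\theta}z)=e^{i(p-q)\theta}B_m(z^p\overline{z}^q)(z)
\end{equation*}
for all non-negative integers $p,q$. I may assume $(a,b)\neq(c,d)$, as otherwise the claim reduces to a single monomial and follows from Theorem \ref{ThmPositive}.

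\emph{Case 1: $a-b\neq c-d$.} Matching distinct angular Fourier modes in $B_mf=f$ forces each monomial term to be fixed individually; by linearity, if $c_1\neq 0$ then $B_m(z^a\overline{z}^b)=z^a\overline{z}^b$. Since $a,b\geq 1$, this polynomial has non-negative coefficients and is non-harmonic, contradicting Theorem \ref{ThmPositive}. Hence $c_1=0$, and the same argument gives $c_2=0$.

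\emph{Case 2: $a-b=c-d$.} Replacing $f$ by $\overline{f}$ and/or swapping the two terms if necessary, I may assume $\alpha:=a-b=c-d\geq 0$ and $a>c$, which forces $b>d\geq 1$. Expanding $K_m$ in its defining series and integrating in polar coordinates gives, for each $(p,q)\in\{(a,b),(c,d)\}$,
\begin{equation*}
B_m(z^p\overline{z}^q)(z)=\frac{z^{\alpha}}{D_m(|z|^2)}\sum_{j=0}^{\infty}\frac{|z|^{2j}\,\Gamma((2p+2j+2)/m)}{\Gamma((2j+2)/m)\,\Gamma((2j+2\alpha+2)/m)},
\end{equation*}
where $D_m(t):=\sum_{j=0}^{\infty}t^j/\Gamma((2j+2)/m)$. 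Dividing $B_mf=f$ by $z^{\alpha}$, clearing the denominator $D_m(|z|^2)$, and setting $t=|z|^2$ converts the fixed-point condition into a power series identity in $t$.

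The decisive step is to compare coefficients at two strategic powers of $t$. Abbreviate $\Gamma_p(j):=\Gamma((2p+2j+2)/m)$. Since $b,d\geq 1$, the coefficient of $t^0$ yields $c_1\Gamma_a(0)+c_2\Gamma_c(0)=0$; since $d<b$, the coefficient of $t^d$ yields
\begin{equation*}
\frac{c_1\Gamma_a(d)+c_2\Gamma_c(d)}{\Gamma((2d+2)/m)\,\Gamma((2d+2\alpha+2)/m)}=\frac{c_2}{\Gamma(2/m)}.
\end{equation*}
Eliminating $c_2$ and assuming $c_1\neq 0$, these two identities collapse to
\begin{equation*}
[\Gamma_a(d)\Gamma_c(0)-\Gamma_a(0)\Gamma_c(d)]\,\Gamma(2/m)=-\Gamma_a(0)\,\Gamma((2d+2)/m)\,\Gamma((2d+2\alpha+2)/m),
\end{equation*}
whose right-hand side is strictly negative. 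The main obstacle is to certify that the left-hand side is strictly positive, which reduces to showing that $x\mapsto\Gamma(x+2d/m)/\Gamma(x)$ is strictly increasing on $(0,\infty)$; this follows from the strict convexity of $\log\Gamma$. Combined with $a>c$, this gives $\Gamma_a(d)\Gamma_c(0)>\Gamma_a(0)\Gamma_c(d)$, producing the required sign contradiction and forcing $c_1=0$, hence $c_2=0$.
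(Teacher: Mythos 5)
Your argument, as written, does not prove the stated conjecture: it only treats binomials $f=c_1z^a\zb^b+c_2z^c\zb^d$ (plus single monomials), i.e.\ it essentially re-derives Theorem \ref{ThmBino}, whereas the conjecture concerns an arbitrary polynomial in $z$ and $\zb$ and is left open in the paper. Your rotational-mode identity $B_m(z^p\zb^q)(e^{i\theta}z)=e^{i(p-q)\theta}B_m(z^p\zb^q)(z)$ does reduce a general fixed polynomial to fixed components in each space $H_{n,\tau}=\{\sum_j a_jz^{j+\tau}\zb^j\}$ (this is the paper's Lemma \ref{LemSum}, proved there via Lemma \ref{LemAtau}), but after that reduction the hard case is a single mode containing three or more terms with arbitrary complex coefficients. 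Your elimination step is intrinsically a $2\times 2$ computation: comparing the coefficients of $t^0$ and $t^d$ gives two linear conditions on $(c_1,c_2)$, and you win by showing the associated $2\times 2$ determinant has a sign. With $k\geq 3$ terms in one mode you would need the nonvanishing, for every $m>0$, of the determinant of a $k\times k$ system built from gamma-function ratios, and that is exactly the obstruction the paper cannot remove: Theorem \ref{ThmPoly} only establishes such nonvanishing off a possible discrete exceptional set $Z_n$ (via analyticity in $m$ and the case $m=2$), and whether $Z_n$ can be nonempty is precisely the open question behind the conjecture. (Also note: if one reads ``for any $m>0$'' as ``for all $m>0$,'' the statement follows in one line from Corollary \ref{Cor2} by choosing $m\notin Z$; your proof does not use that hypothesis either, so under neither reading is the conjecture established.)

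That said, the part you did prove is correct and is a genuinely streamlined version of the paper's proof of Theorem \ref{ThmBino}: your Fourier-mode argument replaces the paper's term-matching of holomorphic/antiholomorphic pieces in the case $a-b\neq c-d$; your comparison at $t^0$ and $t^d$ reproduces the paper's equations \eqref{EqnEqu3} and \eqref{EqnEqu6} and the determinant $A_m$; and your use of the strict monotonicity of $x\mapsto\Gamma(x+s)/\Gamma(x)$ (from strict convexity of $\log\Gamma$) is a clean substitute for the paper's beta-function convexity lemma (Lemma \ref{LemBeta}). But to address the conjecture itself you would have to say something about general polynomials within a fixed mode for a fixed $m$, and nothing in the proposal does that.
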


%%%%%%%%%%%
\section*{Preliminaries}

For $m$ a positive real number, the space $F^{2}_{m}$ is defined 
as a subspace of holomorphic functions in $L^{2}(\C, e^{-|z|^{m}}dA)$. 
We note that,  throughout the paper, we will use   
\[dA(z)=\frac{rdrd\theta}{2\pi}.\]
Given a function $f\in F^{2}_{m}$, its Taylor series 
$f(z)=\sum_{j=0}^{\infty}f_{j}z^{j}$ converges uniformly on compact 
subsets of $\C$. Furthermore, one can check that
\begin{align*}
\|f\|^{2}=\sum_{j=0}^{\infty}|f_{j}|^{2} \frac{1}{m}
\Gamma\left(\frac{2j+2}{m}\right) 
\end{align*}
and
\begin{align*}
\langle f,g\rangle =\sum_{j=0}^{\infty}f_{j}\overline{g}_j\frac{1}{m}
\Gamma\left(\frac{2j+2}{m}\right).
\end{align*}
Hence, the monomials $z^{n}$, with $n=0,1,2,\cdots$ form an orthogonal 
basis for $F^{2}_{m}$. In particular,
\begin{equation*}
\left\{\sqrt{md_j}z^j;\quad j=0,1,2,\cdots\right\}
\end{equation*}
is an orthonormal basis for $F^2_m$ where 
\begin{align}\label{EqnD_j}
d_j=\frac{1}{\Gamma\left(\frac{2j+2}{m}\right)}.
\end{align} 

To find the Bergman kernel, we proceed as follows. Let $f\in F^{2}_{m}$ 
and $z\in\C$. Then
\begin{align*}
|f(z)|=|\sum_{j=0}^{\infty}f_{j}z^{j}|
&\leq \sum_{j=0}^{\infty}|f_{j}||z|^{j}\\
	&= \sum_{j=0}^{\infty}\frac{|f_{j}|}{\sqrt{md_j}} \sqrt{md_j}|z|^{j}\\
&\leq \left(\sum_{j=0}^{\infty}\frac{|f_{j}|^2}{md_j}\right)^{1/2}
	\left(\sum_{j=0}^{\infty} md_j|z|^{2j} \right)^{1/2}.
\end{align*}
The first sum on the right hand side equals $\|f\|$ and the ratio test 
shows that the second sum above converge uniformly on compact subsets.  
Thus, the evaluation map $f\mapsto f(z)$ is a bounded linear functional 
on $\C$ and uniformly bounded for $z$. Furthermore, 
$F^{2}_{m}$ is closed inside $L^{2}(\C,e^{-|z|^{m}}dA)$, 
and hence a Hilbert space. Thus, there exists $K_{m,z}\in F^{2}_{m}$ 
such that for any $f\in F^{2}_{m}$, $f(z)=\langle f, K_{m,z}\rangle$. Indeed, 
\[f(z)=\sum_{j=0}^{\infty}f_{j}z^{j}
=\sum_{j=0}^{\infty}f_jmd_jz^j\frac{1}{md_j}=\langle f, K_{m,z}\rangle,\]
where, 
$K_{m,z}(w)=m\sum_{j=0}^{\infty}d_jw^j\zb^j$, 
for any $z,w\in\C$. From now on, we will write
\begin{align*} 
K_m(w,z)=K_{m,z}(w) =m\sum_{j=0}^{\infty}d_jw^j\zb^j.
\end{align*}
For more information about Fock-type spaces we refer the reader to 
\cite{BHYoussfi2007,BHYZhu2017}. 

We finish this section with the following property about the beta function
\begin{equation*}
	\beta(x,y)=\int_{0}^{1}t^{x-1}(1-t)^{y-1}dt
\end{equation*}
for $x,y>0$. 
\begin{lemma}\label{LemBeta}
The function $x\to\beta(x,k-x)$ is convex on $(0,k)$ and attains 
its minimum at $x=k/2$.
\end{lemma}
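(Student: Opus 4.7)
The plan is to work directly from the integral definition of the beta function. I would write
\[
\beta(x,k-x)=\int_{0}^{1}t^{x-1}(1-t)^{k-x-1}\,dt
\]
for $x\in(0,k)$, and differentiate twice under the integral sign with respect to $x$. The differentiation is justified because, for any fixed $x_{0}\in(0,k)$ and a small closed interval $[x_{0}-\delta,x_{0}+\delta]\subset(0,k)$, the integrand $t^{x-1}(1-t)^{k-x-1}\bigl(\log t-\log(1-t)\bigr)^{j}$ for $j=0,1,2$ is dominated by an integrable function on $(0,1)$, since $t^{\alpha-1}|\log t|^{j}$ and $(1-t)^{\alpha-1}|\log(1-t)|^{j}$ are integrable near the respective endpoints for any $\alpha>0$.

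Differentiating under the integral yields
\[
\frac{d^{2}}{dx^{2}}\beta(x,k-x)
=\int_{0}^{1}t^{x-1}(1-t)^{k-x-1}\bigl(\log t-\log(1-t)\bigr)^{2}\,dt.
\]
The integrand is non-negative, and in fact strictly positive on a set of positive measure, so the second derivative is positive on $(0,k)$. This gives (strict) convexity.

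For the location of the minimum I would exploit the symmetry $\beta(x,k-x)=\beta(k-x,x)$, which follows from the substitution $u=1-t$ in the defining integral. Differentiating this identity in $x$ at $x=k/2$ gives $\beta'(k/2,k/2)=-\beta'(k/2,k/2)$, hence the derivative vanishes at $x=k/2$. A convex function with a critical point attains its minimum there, proving the second assertion.

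The proof is essentially routine, so there is no real obstacle; the only point requiring a moment of care is the justification of differentiation under the integral sign, which I would handle by the dominated-convergence argument sketched above. If one prefers a shorter route, one can instead use the identity $\beta(x,k-x)=\Gamma(x)\Gamma(k-x)/\Gamma(k)$ and appeal to the log-convexity of $\Gamma$ (Bohr–Mollerup), but the direct integral approach is self-contained and keeps the proof elementary.
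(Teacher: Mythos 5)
Your proposal is correct and follows essentially the same route as the paper: differentiate under the integral sign, observe that the second derivative $\int_{0}^{1}t^{x-1}(1-t)^{k-x-1}\bigl(\log\frac{t}{1-t}\bigr)^{2}dt$ is positive, and locate the critical point at $x=k/2$ via the substitution $u=1-t$ (the paper verifies this by splitting the integral at $1/2$, you phrase it as the symmetry $\beta(x,k-x)=\beta(k-x,x)$, which is the same computation). Your explicit dominated-convergence justification for differentiating under the integral is a welcome addition that the paper leaves implicit.
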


\begin{proof}
By definition,
\[\beta(x,k-x)=\int_{0}^{1}t^{x-1}(1-t)^{k-x-1}dt
=\int_{0}^{1}\left(\frac{t}{1-t}\right)^{x-1}(1-t)^{k-2}dt.\]
Writing $(\frac{t}{1-t})^{x-1}=e^{(x-1)\log (\frac{t}{1-t})}$, we can 
compute the partial derivative as
\[\frac{\partial}{\partial x}\beta(x,k-x)
=\int_{0}^{1}\log\left(\frac{t}{1-t}\right) 
\left(\frac{t}{1-t}\right)^{x-1}(1-t)^{k-2}dt.\]  
It is easy to see that $x=\frac{k}{2}$ is a critical point. Indeed, 
taking $u=1-t$ for $1/2<t<1$, 
\begin{align*}
\frac{\partial}{\partial x}\beta\left(\frac{k}{2},\frac{k}{2}\right) 
=& \int_{0}^{1/2}\log\left(\frac{t}{1-t}\right) t^{k/2-1}(1-t)^{k/2-1}dt \\
&+ \int_{1/2}^{1}\log\left(\frac{t}{1-t}\right) t^{k/2-1}(1-t)^{k/2-1}dt\\
=&\int_{0}^{1/2}\log\left(\frac{t}{1-t}\right) t^{k/2-1}(1-t)^{k/2-1}dt \\
&-\int_{1/2}^{0}\log\left(\frac{1-u}{u}\right) (1-u)^{k/2-1}u^{k/2-1}du\\
=&\int_{0}^{1/2}\log\left(\frac{t}{1-t}\right) t^{k/2-1}(1-t)^{k/2-1}dt\\
&-\int_{0}^{1/2}\log\left(\frac{u}{1-u}\right) (1-u)^{k/2-1}u^{k/2-1}du
=0.
\end{align*}
Computing
\begin{equation*}
\frac{\partial^{2}}{\partial x^{2}}\beta(x,k-x) 
= \int_{0}^{1}\left(\log\left(\frac{t}{1-t}\right)\right)^2 
\left(\frac{t}{1-t}\right)^{x-1}(1-t)^{k-2}dt>0,
\end{equation*}
implies that the beta function is convex on the line $x+y=k$. Hence, 
we can conclude that $x=y=k/2$ is the minimum of the beta function 
on the line $x+y=k$.
\end{proof}

%%%%%%%%%%%%%%%%%%%%%%%%%%
\section*{Preparatory Results}
Let $f$ be a polynomial in $z$ and $\zb$ 
(or a function so that the following integrals make sense). Then 
\begin{equation}\label{EqnBerezin}
\begin{split}
B_{m}f(z)&=\int_{\C}f(w)|k_{m,z}(w)|^{2}e^{-|w|^{m}}dA(w)\\
&=\frac{1}{K_{m}(z,z)}\int_{\C}f(w)|K_{m}(w,z)|^{2}e^{-|w|^{m}}dA(w)\\
&=\frac{m^{2}}{K_{m}(z,z)}\sum_{k,l=0}^{\infty}d_{k}d_{l}z^{k}\zb^{l}
\int_{\C}f(w)\wb^{k}w^{l}e^{-|w|^{m}}dA(w)\\
&=\frac{m^{2}}{K_{m}(z,z)}\sum_{k,l=0}^{\infty}
d_{k}d_{l}z^{k}\zb^{l}\lambda^{f}_{k,l},
\end{split}
\end{equation}
where  $d_j$ is defined is \eqref{EqnD_j} and 
\begin{equation*}
\lambda^{f}_{k,l}=\int_{\C}f(w)\wb^{k}w^{l}e^{-|w|^{m}}dA(w).
\end{equation*}

The following lemma is very easy to show so we will skip the proof. 

\begin{lemma}\label{LemHarmonic}
Harmonic polynomials are fixed points of the Berezin transform.  
\end{lemma}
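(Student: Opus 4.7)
The plan is to exploit linearity of the Berezin transform on polynomials and reduce to a direct computation on pure holomorphic and antiholomorphic monomials. The key observation is that a polynomial in $z$ and $\zb$ is harmonic if and only if it contains no mixed term $z^{j}\zb^{k}$ with $j,k\geq 1$; equivalently, every harmonic polynomial is a $\C$-linear combination of $z^{n}$ ($n\geq 0$) and $\zb^{n}$ ($n\geq 1$). Since the moment integrals in \eqref{EqnBerezin} converge for any polynomial and $B_{m}$ acts linearly on such finite sums, it suffices to verify $B_{m}(z^{n})=z^{n}$ for each $n\geq 0$.

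For $f(w)=w^{n}$, I would compute the moments
\[
\lambda^{w^{n}}_{k,l}=\int_{\C}w^{n+l}\wb^{k}e^{-|w|^{m}}dA(w)
\]
in polar coordinates. The angular integration kills every term except $k=n+l$, and in that case the radial integral equals $\tfrac{1}{m}\Gamma\!\left(\tfrac{2(n+l)+2}{m}\right)=\tfrac{1}{m\,d_{n+l}}$, by the definition of $d_{j}$ in \eqref{EqnD_j}. Substituting back into \eqref{EqnBerezin} collapses the double sum via the constraint $k=n+l$, the factor $d_{n+l}$ cancels with its reciprocal, and what remains is
\[
B_{m}(z^{n})(z)=\frac{m\,z^{n}}{K_{m}(z,z)}\sum_{l=0}^{\infty}d_{l}(z\zb)^{l}=\frac{z^{n}\,K_{m}(z,z)}{K_{m}(z,z)}=z^{n}.
\]

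For the antiholomorphic monomials $\zb^{n}$, the cleanest route is to observe that the weight $|k_{m,z}(w)|^{2}e^{-|w|^{m}}$ is real, so complex-conjugating the defining integral for $B_{m}$ yields $\overline{B_{m}f(z)}=B_{m}\overline{f}(z)$; hence $B_{m}(\zb^{n})=\overline{B_{m}(z^{n})}=\zb^{n}$. Combined with linearity this handles every harmonic polynomial. There is no genuine obstacle here; the only point worth attention is recognizing that the single sum surviving after the Kronecker collapse is exactly $z^{n}\cdot K_{m}(z,z)/m$, which is precisely the cancellation the reproducing kernel is designed to produce.
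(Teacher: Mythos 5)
Your proof is correct: the reduction of harmonic polynomials to combinations of $z^{n}$ and $\zb^{n}$, the moment computation showing $B_{m}(z^{n})=z^{n}$, and the conjugation symmetry $\overline{B_{m}f}=B_{m}\overline{f}$ are all valid. The paper omits the proof as "easy," but your calculation is exactly the one implicit in its own machinery (it is \eqref{EqnBerezinMonomial} specialized to $\alpha=0$, where the gamma factors cancel), so there is nothing further to compare.
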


Let us define 
\begin{equation*}
H_{n,\tau}=\left\{\sum_{j=0}^n a_jz^{j+\tau}\zb^j:a_j\in \C\right\}
\end{equation*}
for $n,\tau\in\mathbb{N}_{0}$, and 	
\begin{equation*}
H_{n,\tau}=\left\{\sum_{j=0}^n a_jz^j\zb^{j-\tau}:a_j\in \C\right\}
\end{equation*}
for $\tau\in\mathbb{Z}\setminus \mathbb{N}_0$ and $n\in \mathbb{N}_0$. 
Let $C^{\omega}(\C,e^{-|z|^m}dA)$ denote the real analytic functions 
in $L^2(\C,e^{-|z|^m}dA)$ and $A_{\tau}\subset C^{\omega}(\C,e^{-|z|^m}dA)$ 
denote the set of real analytic functions whose $n$th Taylor polynomial 
belong to $H_{n,\tau}$ for all $n$. We note that $A_{\tau}\cap A_s=\{0\}$ 
for $s\neq \tau$. 

\begin{lemma} \label{LemAtau}
Let $n\in\mathbb{N}_0,\tau\in\mathbb{Z},$ and $m>0$. 
Then $B_mf \in A_{\tau}$ and $K_m(\cdot,\cdot)B_mf \in A_{\tau}$ 
for all $f\in H_{n,\tau}$.
\end{lemma}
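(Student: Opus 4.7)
The plan is to reduce, by linearity of $B_m$, to the case of a single monomial $f(w)=w^a\wb^b$ with $a-b=\tau$, and then to read off the shape of $B_mf$ directly from the series representation in \eqref{EqnBerezin} via angular orthogonality.

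For such $f$, passing to polar coordinates $w=re^{i\theta}$ gives
\begin{equation*}
\lambda^f_{k,l}=\int_{0}^{\infty}\!\!\int_{0}^{2\pi} r^{a+b+k+l+1}\,e^{i\theta(a+l-b-k)}\,e^{-r^m}\,\frac{d\theta\,dr}{2\pi}.
\end{equation*}
The $\theta$-integral vanishes unless $a+l-b-k=0$, that is, unless $k-l=a-b=\tau$; in the surviving case the radial integral is $\frac{1}{m}\Gamma\bigl(\frac{a+b+k+l+2}{m}\bigr)$. Substituting into \eqref{EqnBerezin} yields
\begin{equation*}
K_m(z,z)\,B_mf(z)=m\sum_{\substack{k,l\ge 0\\ k-l=\tau}} d_k\, d_l\,\Gamma\!\Bigl(\tfrac{a+b+k+l+2}{m}\Bigr)\,z^k\zb^l.
\end{equation*}
Reindexing by $j=l$ when $\tau\ge 0$ (so $k=j+\tau$) or by $j=k$ when $\tau<0$ (so $l=j-\tau$) shows that only monomials of the form required in the definition of $A_\tau$ appear. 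Hence $K_m(\cdot,\cdot)B_mf\in A_\tau$, and linearity extends the claim to all $f\in H_{n,\tau}$.

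For the statement $B_mf\in A_\tau$, I would write $B_mf=\bigl(K_m(z,z)\bigr)^{-1}\cdot\bigl(K_m(z,z)B_mf\bigr)$. The diagonal kernel $K_m(z,z)=m\sum_j d_j|z|^{2j}$ is strictly positive and depends only on $|z|^2$, so its reciprocal is real analytic and lies in $A_0$. The Cauchy product of a power series in $A_0$ (only $z^k\zb^k$ terms) with one in $A_\tau$ (only $z^k\zb^l$ with $k-l=\tau$) again contains only $k-l=\tau$ terms, giving $B_mf\in A_\tau$.

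The main obstacle I anticipate is not the structural identification above but the technical bookkeeping around convergence: one must justify interchanging sum and integral in the derivation of the series for $K_m(z,z)B_mf$, which follows by Tonelli applied to the absolutely convergent expansion of $|K_m(w,z)|^2$ against the Gaussian-type weight $e^{-|w|^m}$ when $f$ is a polynomial, and then verify that the formal series so produced is the actual Taylor expansion of $B_mf$ at the origin, so that its $n$th Taylor polynomial genuinely lies in $H_{n,\tau}$ as required by the definition of $A_\tau$.
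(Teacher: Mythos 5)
Your proof is correct and follows essentially the same route as the paper: reduce to a monomial, use angular orthogonality in polar coordinates to see that $\lambda^f_{k,l}$ survives only when $k-l=\tau$, conclude $K_m(\cdot,\cdot)B_mf\in A_\tau$, and then divide by $K_m(z,z)\in A_0$ (the paper phrases this last step as ``long division''). Your added remarks on justifying the sum--integral interchange and identifying the formal series with the Taylor expansion are sensible but not a departure from the paper's argument.
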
 

\begin{proof}
Since $K_m(z,z)=m\sum_{k=0}^{\infty}d_k|z|^{2k}\in A_0$, it is enough 
to show that  $K_m(\cdot,\cdot)B_m$ maps a monomial in $H_{n,\tau}$ 
into $A_{\tau}$. Since $H_{n,\tau}$ is composed of polynomials, it is 
enough to prove the lemma for monomials. So, without loss of generality, 
let $\tau\geq 0$ and $f(z)=z^{\alpha+\tau}\zb^{\alpha}$ for some 
$\alpha\in\mathbb{N}_0$. Then by \eqref{EqnBerezin} we have 
\begin{equation*}
K_m(z,z)B_mf(z)=m^2\sum_{k,l=0}^{\infty}d_kd_lz^k\zb^l\lambda^f_{k,l}, 
\end{equation*}
where 
\begin{equation*}
\lambda^f_{k,l} 
=\int_{\C} w^{\alpha+\tau}\wb^{\alpha}\wb^{k}w^{l} e^{-|w|^m}dA(w)
=\int_{\C} w^{\alpha+\tau+l}\wb^{\alpha+k} e^{-|w|^m}dA(w).
\end{equation*}
Taking $w=re^{i\theta}$ and the normalized measure 
$dA(w)=\frac{1}{2\pi}rdrd\theta$, it is easy to see that 
the above integral is nonzero only if $k=l+\tau$. Then
\begin{equation}\label{EqnBerezinMonomial}
\begin{split}
K_{m}(z,z)B_{m}f(z)
&=m^{2}\sum_{l=0}^{\infty}d_{l+\tau}d_{l}z^{l+\tau}\zb^{l}
	\int_{0}^{\infty}r^{2(\alpha+\tau+l)+1}e^{-r^{m}}dr\\
&=m\sum_{l=0}^{\infty}d_{l+\tau}d_{l}z^{l+\tau}\zb^{l}
	 \Gamma\left(\frac{2(\alpha+\tau+l)+2}{m}\right)\\
&=m\sum_{l=0}^{\infty}\frac{d_{l+\tau}d_{l}}{d_{\alpha+\tau+l}}
	z^{l+\tau}\zb^{l}\in A_{\tau}.
\end{split}
\end{equation}
Therefore, one can use long division to show that 
$B_{m}f\in A_{\tau}$ as $K_m\in A_0$. 
\end{proof}

\begin{lemma}\label{LemSum} 
Let $f$ be a polynomial (of $z$ and $\zb$) of degree $n$ such that 
$B_m f=f$. Then $B_mf_{\tau}=f_{\tau}$ for $-n\leq \tau\leq n$ 
where $f=\sum_{\tau=-n}^nf_{\tau}$ and $f_{\tau}\in H_{n,\tau}$.
\end{lemma}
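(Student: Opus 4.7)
The plan is to decompose $f$ along the ``diagonals'' $j-k=\tau$ and to exploit the fact, established in Lemma~\ref{LemAtau}, that the Berezin transform preserves each diagonal class $A_\tau$. First I would write $f=\sum_{\tau=-n}^{n} f_{\tau}$, where $f_{\tau}$ collects exactly those monomials $c_{j,k}z^{j}\zb^{k}$ in $f$ with $j-k=\tau$, so that $f_{\tau}\in H_{n,\tau}$ by definition of $H_{n,\tau}$. This is just a rearrangement of the finitely many terms of the polynomial $f$.

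Next, by linearity of the Berezin transform (applied to a finite sum of polynomial summands, each of which is in the domain of $B_m$),
\begin{equation*}
f=B_m f=\sum_{\tau=-n}^{n} B_m f_{\tau}.
\end{equation*}
By Lemma~\ref{LemAtau}, each summand $B_m f_{\tau}$ lies in $A_{\tau}$, while on the other side $f_{\tau}\in H_{n,\tau}\subset A_{\tau}$. Thus the identity $\sum_{\tau}(B_m f_{\tau}-f_{\tau})=0$ is a relation among real analytic functions with $B_m f_{\tau}-f_{\tau}\in A_{\tau}$.

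The final step is uniqueness of the $A_{\tau}$-decomposition. Any element of $A_{\tau}$ is represented by a convergent power series at the origin involving only monomials $z^{a}\zb^{b}$ with $a-b=\tau$. Therefore the contributions of different $A_{\tau}$'s to the Taylor series at the origin are supported on disjoint index sets, and any finite sum $\sum_{\tau} g_{\tau}=0$ with $g_{\tau}\in A_{\tau}$ forces each $g_{\tau}=0$ by comparing Taylor coefficients. Applying this to $g_{\tau}=B_m f_{\tau}-f_{\tau}$ yields $B_m f_{\tau}=f_{\tau}$ for every $\tau$ with $-n\leq\tau\leq n$.

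I do not anticipate a serious obstacle here; the only point to check carefully is that the notion of ``direct sum'' beyond the pairwise statement $A_{\tau}\cap A_s=\{0\}$ recorded in the text is valid, which is immediate from the disjoint supports of the Taylor expansions described above. Everything else is linearity of $B_m$ and a direct application of Lemma~\ref{LemAtau}.
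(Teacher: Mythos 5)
Your proof is correct and follows essentially the same route as the paper: decompose $f$ into its diagonal components $f_{\tau}\in H_{n,\tau}$, use linearity and Lemma~\ref{LemAtau} to place each $B_m f_{\tau}$ in $A_{\tau}$, and conclude from the independence of the spaces $A_{\tau}$. Your explicit justification of that independence via the disjoint Taylor supports at the origin is a detail the paper leaves implicit, but it is the same argument.
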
 	

\begin{proof} 
Since $f$ is a polynomial (of $z$ and $\zb$) of degree $n$, 
we have a unique decomposition 	
\begin{equation*}
f=\sum_{\tau=-n}^nf_{\tau},
\end{equation*}
where $f_{\tau}\in H_{n,\tau}$. We assume that $B_mf=f$. Then, 
by Lemma \ref{LemAtau}, we have $B_mf_{\tau}\in A_{\tau}$ 
for $-n\leq \tau\leq n$. Hence,  
\begin{equation*}
\sum_{\tau=-n}^nf_{\tau} =f=B_mf =\sum_{\tau=-n}^nB_mf_{\tau},
\end{equation*}
implying that $B_mf_{\tau}=f_{\tau}$. 
\end{proof} 

%%%%%%%%%%%%%%%%%%%%%%%%%%%%%
\section*{Proof of Theorem \ref{ThmPositive}}
\begin{proof}[Proof of Theorem \ref{ThmPositive}] 
By Lemma \ref{LemSum}, it is enough to prove the theorem 
for $f\in H_{n,\tau}$, where $-n\leq \tau\leq n$. 
Since, by Lemma \ref{LemHarmonic} harmonic polynomials are fixed, 
without loss of generality, we assume that $B_mf=f$ where 
$f(z)=\sum_{j=1}^{n}a_{j}z^{j+\tau}\zb^{j}\in H_{n,\tau}$ 
is a  polynomial with $a_j\geq 0$ for all $j\geq 1$,  
$a_n>0$, and $\tau\geq 0$. Hence, $f$ is not harmonic. 
Similar to \eqref{EqnBerezinMonomial}, one obtains that 
\begin{equation*}
K_{m}(z,z)B_mf(z)=m\sum_{l=0}^{\infty}\sum_{j=1}^{n} a_{j}
\frac{d_{l+\tau}d_{l}}{d_{j+l+\tau}}z^{l+\tau}\zb^{l},
\end{equation*}
and 
\[K_{m}(z,z)f(z)
=m\sum_{l=0}^{\infty}\sum_{j=1}^{n}a_{j}d_{l}z^{j+\tau+l}\zb^{j+l}
=m\sum_{l=j}^{\infty}\sum_{j=1}^{n}a_{j}d_{l-j}z^{l+\tau}\zb^{l}.\]
Note that by definition and Lemma \ref{LemAtau}, $K_{m}(\cdot,\cdot)f$ 
and $K_{m}(\cdot,\cdot)B_{m}f$ both belong to $A_\tau$, and hence their 
difference as well.  
\begin{align}\nonumber 
0=K_{m}(z,z)B_mf(z)-K_{m}(z,z)f(z)
= & m\sum_{l=0}^{j-1}\sum_{j=1}^{n}a_{j}
\frac{d_{l+\tau}d_{l}}{d_{j+\tau+l}}z^{l+\tau}\zb^{l}\\
\label{EqnZero}&+m\sum_{l=j}^{\infty}\sum_{j=1}^{n}a_{j}
\left(\frac{d_{l+\tau}d_{l}}{d_{j+\tau+l}}-d_{l-j}\right)z^{l+\tau}\zb^{l}
\end{align}
for all $z$. We note that $d_{j+\tau+l}$ is nonzero for any $j\geq 1$ 
because the gamma function has poles only on negative integers. 
We will use the following well known formula
\[\beta(a,b)=\frac{\Gamma(a)\Gamma(b)}{\Gamma(a+b)}.\]
Taking $x=\frac{2}{m}$, and 
\[d_{l}=\frac{1}{\Gamma\left(\frac{2(l+1)}{m}\right)}=\frac{1}{\Gamma((l+1)x)},\] 
for every nonnegative $j$ we obtain
\begin{align*}
\frac{d_{l+\tau}d_{l}}{d_{j+\tau+l}}-d_{l-j}
= \frac{d_{l+\tau}d_{l}-d_{l-j}d_{j+\tau+l}}{d_{j+\tau+l}} 
= \Gamma((j+\tau+l+1)x)\Gamma((2l+\tau+2)x)
\frac{B_{l,j,\tau}}{A_{l,j,\tau}} 
\end{align*}
where 
\begin{align*}
A_{l,j,\tau}=&\Gamma((l+\tau+1)x)
\Gamma((l+1)x)\Gamma((l-j+1)x)\Gamma((j+\tau+l+1)x),\\
B_{l,j,\tau}=&\beta((j+l+\tau+1)x,(l-j+1)x)-\beta((l+\tau+1)x,(l+1)x).
\end{align*}  
In this case  
\[k=(j+l+\tau+1)x+(l-j+1)x=(l+\tau+1)x+(l+1)x=(2l+\tau+2)x.\] 
By Lemma \ref{LemBeta},  $\beta(y,k-y)$ is a convex function of $y$ and takes its 
minimum at $y=k/2$. 
Hence $\beta(\alpha_1,k-\alpha_1)> \beta(\alpha_2,k-\alpha_2)$ 
if $\alpha_1> \alpha_2\geq k/2$. 
We choose $\alpha_1=(j+l+\tau+1)x$ and $\alpha_2=(l+\tau+1)x$. Then 
\[(j+l+\tau+1)x>(l+\tau+1)x>(l+\frac{\tau}{2}+1)x=\frac{k}{2}.\]
Moreover, $k-\alpha_1=(l-j+1)x$ and $k-\alpha_2=(l+1)x$. 
Then for any $j\geq 1$, we have 
\[\beta((j+\tau+l+1)x,(l-j+1)x)-\beta((l+\tau+1)x,(l+1)x)>0.\]
Hence, $a_n>0$ implies that 
\[K_{m}(z,z)B_mf(z)-K_{m}(z,z)f(z)\not\equiv 0\] 
which contradicts \eqref{EqnZero}. Therefore, $f$ is not a fixed 
point of the Berezin transform.
\end{proof} 

%%%%%%%%%%%%%%%%%%%%%%%%%%
\section*{Proof of Theorem \ref{ThmCase2}}
When $m=2$, one can write the Berezin transform as
\[B_2f(z)=2\int_{\C} f(z+\xi)e^{-|\xi|^2}dA(\xi)\]
where $dA=rdrd\theta/2\pi$. 
We note that the since we normalize $dA$ by $2\pi$, our formula 
above has a multiple 2 instead of $1/\pi$ as in  \cite[section 3.2]{ZhuBookFock}. 
\begin{lemma}\label{LemM=2}
Let $f(z)=z^{j+\tau}\zb^j$ for $j\in \mathbb{N}$ and $\tau\in\mathbb{N}_0$. 
\[B_2f(z)= z^{j+\tau}\zb^j+\frac{(j+\tau)j}{d_1}z^{j-1+\tau}\zb^{j-1}
+\textit{lower order terms}. \]
Furthermore, $B_2:H_{n,\tau}\to H_{n,\tau}$ is a bijection.  
\end{lemma}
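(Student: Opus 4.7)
The plan is to exploit the translation formula $B_2 f(z) = 2\int_{\C} f(z+\xi) e^{-|\xi|^2}\,dA(\xi)$ recorded just before the lemma. First I would substitute $f(z+\xi) = (z+\xi)^{j+\tau}(\zb+\xib)^{j}$, expand both factors with the binomial theorem to obtain a double sum indexed by $(p,q)$ whose generic term carries $z^{j+\tau-p}\zb^{j-q}\xi^{p}\xib^{q}$, and then pass to polar coordinates $\xi=re^{i\theta}$. The angular integral $\int_0^{2\pi}e^{i(p-q)\theta}\,d\theta$ forces $p=q$, and the remaining radial integral is $\int_0^{\infty}r^{2p+1}e^{-r^2}\,dr=p!/2$; the prefactor $2$ cancels the $1/2$, yielding the closed form
\begin{equation*}
B_{2}\bigl(z^{j+\tau}\zb^{j}\bigr)(z)=\sum_{p=0}^{j}\binom{j+\tau}{p}\binom{j}{p}p!\,z^{j+\tau-p}\zb^{j-p}.
\end{equation*}

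From this formula the $p=0$ term is $z^{j+\tau}\zb^{j}$ and the $p=1$ coefficient is $(j+\tau)j$. Since $d_{1}=1/\Gamma(4/m)=1/\Gamma(2)=1$ when $m=2$, this coefficient equals $(j+\tau)j/d_{1}$, matching the displayed expression. All terms with $p\geq 2$ have strictly smaller total degree in $z$ and $\zb$, which is the content of the "lower order terms" conclusion.

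For the bijection assertion, the closed form shows that, when $\tau\geq 0$, $B_{2}$ sends each basis element $z^{j+\tau}\zb^{j}$ of $H_{n,\tau}$ to itself plus a combination of the lower basis elements $z^{i+\tau}\zb^{i}$ with $i<j$. Hence, in the ordered basis $z^{\tau},z^{1+\tau}\zb,\ldots,z^{n+\tau}\zb^{n}$, the operator $B_{2}|_{H_{n,\tau}}$ is represented by a lower-triangular matrix with ones on the diagonal and is therefore invertible. The case $\tau<0$ follows by complex conjugation: the map $f\mapsto \overline{f}$ is a $\C$-antilinear bijection $H_{n,\tau}\to H_{n,-\tau}$, and one checks directly from the integral definition of $B_{2}$ (using that $|k_{2,z}(w)|^{2}e^{-|w|^{2}}$ is real) that $B_{2}\overline{f}=\overline{B_{2}f}$, so the bijectivity of $B_{2}$ on $H_{n,-\tau}$ transfers to $H_{n,\tau}$. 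The main (minor) obstacle is the bookkeeping: keeping the binomial cutoff at $p\leq j$ (via $\binom{j}{p}=0$ for $p>j$) so that the sum terminates at $j$ rather than $j+\tau$, and verifying that the conjugation symmetry lines up with the definition of $H_{n,\tau}$ for negative $\tau$.
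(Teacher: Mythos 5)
Your proof is correct and follows essentially the same route as the paper: the translation formula $B_2f(z)=2\int_{\C}f(z+\xi)e^{-|\xi|^2}dA(\xi)$, binomial expansion with the angular integral forcing the indices to match, the radial integral producing $p!$, and invertibility on $H_{n,\tau}$ from a triangular matrix with unit diagonal. The only differences are cosmetic: your index $p$ is the paper's $j-s$, your ``lower-triangular'' versus the paper's upper-triangular matrix is just the opposite ordering convention, and your conjugation argument for $\tau<0$ is harmless extra material that the paper defers to the proof of Theorem \ref{ThmCase2} via a ``without loss of generality'' reduction.
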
 
\begin{proof} 
By the binomial expansion formula, we compute the Berezin 
transform for $f(z)=z^{j+\tau}\zb^j$ as follows. 
\begin{align*}
B_2f(z)=&\, 2 \int_{\C} f(z+\xi)e^{-|\xi|^2}dA(\xi)\\
=&\, 2\sum_{s=0}^j\sum_{t=0}^{j+\tau} \binom{j+\tau}{t}\binom{j}{s}z^t\zb^s
\int_{\C} \xi^{j+\tau-t}\overline{\xi}^{j-s}e^{-|\xi|^2}dA(\xi)\\
(t=s+\tau)\quad = &\,\frac{1}{\pi}\sum_{s=0}^j\binom{j+\tau}{s+\tau}
\binom{j}{s}z^{s+\tau}\zb^s  \int_{0}^{2\pi}\int_{0}^{\infty} 
	r^{2(j-s)+1} e^{-r^2}drd\theta\\
=&\, \sum_{s=0}^{j} \binom{j+\tau}{s+\tau}
\binom{j}{s}\Gamma(j-s+1)z^{s+\tau}\zb^s\\
=&\, z^{j+\tau}\zb^j+\frac{(j+\tau)j}{d_1}z^{j-1+\tau}\zb^{j-1}
+\textit{lower order terms},
\end{align*}
where $d_1^{-1}=\Gamma(2)$, as defined before. Furthermore, 
the formula above shows that $B_2f\in H_{n,\tau}$ whenever 
$f\in H_{n,\tau}$. 

Let us choose $B=\{z^{\tau},z^{1+\tau}\zb,\ldots,z^{n+\tau}\zb^n\}$ 
as a basis for $H_{n,\tau}$. Then the matrix representation $[B_2]$ 
for $B_2$ with respect to basis $B$ can be computed as follows.
\begin{align}\label{EqnMatrix}
[B_2]=\left[ \begin{matrix} 1 & (1+\tau)/d_1&\cdots &\cdots &\cdots \\ 
	0& 1&  (2+\tau)2/d_1&\cdots &\cdots \\
	0&0&1&(3+\tau)3/d_1&\cdots \\
	\vdots &\vdots &\vdots &\vdots  &\\
	0&\cdots &\cdots &0&1 \end{matrix} \right]_{(n+1)\times(n+1)}.
\end{align} 
Therefore, $B_{2}$ is a bijection on $H_{n,\tau}$ because the 
matrix $[B_2]$ is nonsingular.  
\end{proof} 

%%%%%%%%%%%%%%%%%%%%%%%%
\begin{proof}[Proof of Theorem  \ref{ThmCase2}]
Let $f$ be a polynomial of $z$ and $\zb$ of order $n$. Then,
by Lemma \ref{LemSum}, it is enough to prove the theorem for
$f\in H_{n,\tau}$, where $-n\leq \tau\leq n$. Without loss of generality, 
we fix $\tau\geq 0$. 

Using the notation in the proof of Lemma \ref{LemM=2}, we observe 
that the matrix $[B_2]$ in \eqref{EqnMatrix} is an upper 
triangular matrix with 1 on the diagonal and $(j+\tau)j/d_1$ on the entries  
above the diagonal. Hence, $[B_2]-I$ is an upper triangular matrix 
with 0 on the diagonal and $(j+\tau)j/d_1$ on the entries above the 
diagonal. Then the first column and the last row of $[B_2]-I$ 
are composed of zero entries.  That is 
\[[B_2] - I = \left[
\begin{array}{c|c}
0 & \multirow{3}{*}{\smash{\raisebox{0.5ex}{$M$}}} \\
\vdots & \\
0 & \\
\hline
0 & \cdots \; 0
\end{array}
\right],\]
where 
\[M=\left[ \begin{matrix}  (1+\tau)/d_1&\cdots &\cdots &\cdots \\ 
0&  (2+\tau)2/d_1&\cdots &\cdots \\
0&0&(3+\tau)3/d_1&\cdots \\
\vdots &\vdots &\vdots &\vdots  \\
0&0&0&(n+\tau)n/d_1 \\
\end{matrix} \right]_{n\times n}\]
is the $n\times n$ submatrix of $[B_2]-I$. We note that $M$ is upper
 triangular with positive entries on the diagonal.
Hence $M$ is of rank $n$ as $\det(M)>0$. That is, $rank([B_2]-I)\geq n$. 
We also know that $z^{\tau}$ is in the kernel of $[B_2]-I$. 
The rank-nullity theorem implies that 
\[rank([B_2]-I)+dim(ker([B_2]-I))=n+1.\]
Then, $rank([B_2]-I)= n$ and $dim(ker([B_2]-I))=1$. Therefore, 
if $f\in H_{n,\tau}$ such that $B_2f=f$, then $f\in ker([B_2]-I)$. 
Namely, $f$ is holomorphic. 
\end{proof} 

%%%%%%%%%%%%%%%%%%%%%%%%%%%
\section*{Proof of Theorem \ref{ThmPoly}}
\begin{proof}[Proof of Theorem \ref{ThmPoly}]
By Lemma \ref{LemSum}, it is enough to prove the theorem for functions 
in $H_{n,\tau}$, where $0\leq \tau\leq n$. Let us define 
\[B_{m,n}:H_{n,\tau}\to C^{\omega}(\C,e^{-|z|^m}dA)\] 
to be the Berezin transform of $F^2_m$ restricted to $H_{n,\tau}$ and 
$T_{m,n}:H_{n,\tau}\to C^{\omega}(\C,e^{-|z|^m}dA)$ as 
\[T_{m,n}f(z)=K_m(z,z)B_{m,n}f(z)-K_m(z,z)f(z)\]
for $f\in H_{n,\tau}$ and $z\in \C$. Then
\[ker(B_{m,n}-I)=ker(T_{m,n})\supseteq span\{z^{\tau}\}=ker(T_{2,n}),\] 
and 
\[rank(B_{m,n}-I)=rank(T_{m,n})\leq n=rank(T_{2,n}).\] 
Then rank-nullity theorem implies that 
\[rank(T_{m,n})+dim(ker(T_{m,n}))=dim(H_{n,\tau})=n+1\] 
for all $m>0$. 

Let $f(z)=z^{j+\tau}\zb^{j}\in H_{n,\tau}$. Then by 
\eqref{EqnBerezinMonomial} we have 
\begin{align*}
K_{m}(z,z)B_mf(z) 
=m\sum_{l=0}^{\infty}\frac{d_{l+\tau}d_{l}}{d_{j+\tau+l}}z^{l+\tau}\zb^{l}.
\end{align*}
Moreover, $K_{m}(z,z)f(z)=m\sum_{l=0}^{\infty}d_{l}z^{l}\zb^{l}z^{j+\tau}\zb^{j}$, 
and hence
\begin{align*}
T_{m,n}(z^{j+\tau}\zb^{j})&=m\sum_{l=0}^{\infty}
\frac{d_{l+\tau}d_{l}}{d_{j+\tau+l}}z^{l+\tau}\zb^{l}
-m\sum_{l=0}^{\infty}d_{l}z^{j+\tau+l}\zb^{j+l}\\
&=m\sum_{l=0}^{\infty}\frac{d_{l+\tau}d_{l}}{d_{j+\tau+l}}z^{l+\tau}\zb^{l}
- m\sum_{l=j}^{\infty}d_{l-j}z^{l+\tau}\zb^{l}\\
&=m\sum_{l=0}^{j-1}\frac{d_{l+\tau}d_{l}}{d_{j+\tau+l}}z^{l+\tau}\zb^{l}
+m\sum_{l=j}^{\infty}\left(\frac{d_{l+\tau}d_{l}}{d_{j+\tau+l}}-d_{l-j}\right)
z^{l+\tau}\zb^{l}.
\end{align*} 
Hence, 
\[T_{m,n}(z^{j+\tau}\zb^j)=\sum_{k=0}^{\infty}a_{j,k}(m)z^{k+\tau} \zb^k,\]
where each $a_{j,k}$ is holomorphic on  $U=\{z\in \C:Re(z)>0\}$, 
by properties of the gamma function. 
Then we will study the rank and nullity of the matrix 
$[a_{j,k}(m)]$ as a function of the complexified variable $m$. 
We note that the matrix $[a_{j,k}(m)]$  is of size $\infty\times (n+1)$.  

Since, by Theorem \ref{ThmCase2}, $rank(T_{2,n})=n$, there exists a 
submatrix $S_{n}$ of $[a_{j,k}]$ of size $n\times n$ with entries 
holomorphic on  $U$ such that $\det(S_{n}(2))\neq 0$. 
Let $S(m)=\det(S_n(m))$ and $Z_{n,\tau}$ denote the zero set of $S$. 
Then $S$ is holomorphic on $U$ and $Z_{n,\tau}$ is a discrete set with no 
accumulation point in $U$. Hence $rank(T_{m,n})\geq n$ for 
$z\in U\setminus Z_{n,\tau}$ as $\det(S_n(m))\neq 0$ for 
$z\in U\setminus Z_{n,\tau}$. However, $rank(T_{m,n})\leq n$ 
as $dim(ker(T_{m,n}))\geq 1$ for all $0<m<\infty$. Then, 
$rank(T_{m,n})=n$ and $dim(ker(T_{m,n}))=1$ for 
$m\in (0,\infty)\setminus Z_{n,\tau}$. Namely, 
$span\{z^{\tau}\}=ker(T_{m,n})$ for $m\in (0,\infty)\setminus Z_{n,\tau}$. 

Let $Z_n=\cup_{\tau=-n}^nZ_{n,\tau}$. Hence $Z_n$ is a discrete set 
with no cluster in $(0,\infty)$. Furthermore, we showed that only the 
holomorphic polynomial is fixed whenever $0\leq \tau \leq n$ and 
$m\in [0,\infty)\setminus Z_n$. Similarly, we can show that  only the 
conjugate holomorphic polynomial is fixed whenever $-n\leq \tau < 0$. 
Therefore, if  $m\in (0,\infty)\setminus Z_n$ and $B_mf=f$ for a 
polynomial $f$ of degree at most $n$, then $f$ is harmonic.
\end{proof}

%%%%%%%%%%%%%%%%%%%%%%%%%%%%%%%%%%%
\section*{Proof of Theorem \ref{ThmBino}}
First we focus on a single monomial.
\begin{proposition}\label{PropMono}
Let $m>0$ and $f(z)=z^{p}\zb^{q}$ be a fixed point for the Berezin 
transform $B_{m}$. Then either $p$ or $q$ must be zero. That is, $f$ 
is either a holomorphic or a conjugate holomorphic monomial.
\end{proposition}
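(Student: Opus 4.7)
The plan is to exploit formula \eqref{EqnBerezinMonomial} directly: it gives a closed expression for $K_m(z,z)B_m f$ when $f$ is a single monomial, and comparing this term by term to $K_m(z,z)f$ forces all but finitely many coefficient matches that are incompatible with $pq>0$.

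First I would reduce to the case $p \geq q$. If $p < q$, then $\overline{f} = \bar z^p z^q$ is a fixed point of $B_m$ whenever $f$ is (by conjugating the Berezin integral), so it suffices to prove the proposition assuming $p \geq q$. Set $\tau = p - q \geq 0$, so that $f(z) = z^{q+\tau}\bar z^q \in H_{q,\tau}$. If $q = 0$, we are done, so I assume $q \geq 1$ for contradiction.

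Next, I would apply \eqref{EqnBerezinMonomial} with $\alpha = q$ to obtain
\begin{equation*}
K_m(z,z)\,B_m f(z) = m\sum_{l=0}^{\infty} \frac{d_{l+\tau}\,d_l}{d_{p+l}}\, z^{l+\tau}\bar z^l,
\end{equation*}
and compute directly
\begin{equation*}
K_m(z,z)\,f(z) = m\sum_{l=0}^{\infty} d_l\, z^{p+l}\bar z^{q+l} = m\sum_{l=q}^{\infty} d_{l-q}\, z^{l+\tau}\bar z^l,
\end{equation*}
after reindexing. Under the hypothesis $B_m f = f$, these two real-analytic functions agree on $\C$, and since both lie in $A_\tau$ by Lemma \ref{LemAtau}, their power series coefficients with respect to the basis $\{z^{l+\tau}\bar z^l\}_{l\geq 0}$ must coincide.

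The decisive step is then to compare the coefficient of the lowest-order term $z^\tau$ (corresponding to $l=0$). The left-hand series contributes $m\,d_\tau d_0/d_p$, which is strictly positive because the $d_j$ are positive (the gamma function has no zeros). The right-hand series contributes $0$, since its summation starts at $l = q \geq 1$. This contradiction forces $q = 0$, finishing the proof. The argument is essentially free of obstacles once \eqref{EqnBerezinMonomial} is invoked; the only subtlety is justifying term-by-term comparison, which follows from uniqueness of power series expansions for elements of $A_\tau$.
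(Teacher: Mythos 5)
Your proof is correct and follows essentially the same route as the paper: both compare $K_m(z,z)B_mf$ with $K_m(z,z)f$ and observe that the $l=0$ term on the Berezin side is a nonzero holomorphic term $m\,d_\tau d_0 d_p^{-1}z^{\tau}$ with no counterpart on the other side, which starts at $l=q\geq 1$. The only cosmetic differences are that you cite \eqref{EqnBerezinMonomial} instead of recomputing the integral and dispose of the case $p<q$ by conjugation rather than by the symmetric computation.
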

\begin{proof} 
By Lemma \ref{LemHarmonic}, $z^{p}$ and $\zb^{q}$ are fixed under 
$B_{m}$. We assume that $B_{m}f=f$ and 
$p,q\geq 1$. Then \eqref{EqnBerezin} implies that
\begin{equation*}
K_{m}(z,z)z^{p}\zb^{q}=K_{m}(z,z)B_mf(z)
=\int_{\mathbb{C}}w^{p}\wb^{q}|K_{m}(w,z)|^{2}e^{-|w|^{m}}dA(w).
\end{equation*}
Hence,
\begin{align*}
m\sum_{k=0}^{\infty}d_{k}z^{p}\zb^{q}z^{k}\zb^{k}
&=m^{2}\sum_{k,l=0}^{\infty}d_{k}d_{l}
	\int_{\mathbb{C}}w^{p}\wb^{q}z^{k}
	\wb^{k}\zb^{l}w^{l}e^{-|w|^{m}}dA(w)\\
&=m^{2}\sum_{k,l=0}^{\infty}d_{k}d_{l}z^{k}
	\zb^{l}\int_{\mathbb{C}}w^{p+l}
	\wb^{q+k}e^{-|w|^{m}}dA(w).
\end{align*}
First, assume that $p\geq q$. The above integral is nonzero only 
if $k=p+l-q$. Therefore, 
\begin{equation}\label{EqnProp1}
	\begin{split}
m\sum_{k=0}^{\infty}d_{k}z^{p+k}\zb^{q+k}
&=m^{2}\sum_{l=0}^{\infty}d_{p+l-q}d_{l}z^{p+l-q}
	\zb^{l}\int_{0}^{\infty}r^{2(p+l)+1}e^{-r^{m}}dr\\
&=m^{2}\sum_{l=0}^{\infty}d_{p+l-q}d_{l}z^{p+l-q}
	\zb^{l} \frac{1}{m}\Gamma\left(\frac{2(p+l)+2}{m}\right)\\
&=m\sum_{l=0}^{\infty}d_{p+l-q}d_{l}z^{p+l-q}\zb^{l}\frac{1}{d_{p+l}}.
\end{split}
\end{equation} 
For the above equation to hold, every corresponding $l$-term on each side 
must agree. Note that the zeroth term on the right hand side is holomorphic, 
while there is no holomorphic term on the left hand side. Hence, they 
can not be equal.

When $p<q$, similarly we obtain
\begin{equation*}
m\sum_{k=0}^{\infty}d_{k}z^{p+k}\zb^{q+k} 
=m\sum_{k=0}^{\infty} d_{k}d_{q-p+k}z^{k}\zb^{q-p+k}\frac{1}{d_{q+k}}.  
\end{equation*}
The zeroth term on the right hand side is conjugate holomorphic, 
while there is no conjugate holomorphic term on the left hand side. 
Hence, they are not equal.

Therefore, we can conclude that any monomial which is a fixed point 
of the Berezin transformation should be either of the form $z^{p}$ 
or $\zb^{q}$.
\end{proof}

\begin{proof}[Proof of Theorem \ref{ThmBino}] 
Since the case of monomials was considered in Proposition \ref{PropMono}, 
we can assume that $B_mf=f$ and $a,b,c,d$ are positive integers 
such that $a\neq c$ or $b\neq d$. Then by \eqref{EqnBerezin} we get
\begin{align*}
K_{m}(z,z)(c_{1}z^{a}\zb^{b}+c_{2}z^{c}\zb^{d})=\,&K_{m}(z,z)B_mf(z)\\
=&\int_{\mathbb{C}}(c_{1}w^{a}\wb^{b}+c_{2}w^{c}\wb^{d}) 
|K_{m}(w,z)|^{2}e^{-|w|^{m}}dA(w).
\end{align*}
Therefore, 
\begin{equation}\label{EqnBin1}
\begin{split}
	m\sum_{k=0}^{\infty}
	d_{k}z^{k}\zb^{k}(c_{1}z^{a}\zb^{b}+c_{2}z^{c}\zb^{d}) 
	=\,& m^2\sum_{k,l=0}^{\infty}d_{k}d_{l}
	\int_{\mathbb{C}}(c_{1}w^{a}\wb^{b}+c_{2}w^{c}
	\wb^{d})z^{k}\wb^{k}\zb^{l}w^{l}e^{-|w|^{m}}dA(w)\\
	=\,&c_{1}m^2\sum_{k,l=0}^{\infty}d_{k}d_{l}z^{k}
	\zb^{l}\int_{\mathbb{C}}w^{a+l}\wb^{b+k}e^{-|w|^{m}}dA(w)\\
	&+c_{2}m^2\sum_{k,l=0}^{\infty}d_{k}d_{l}z^{k}\zb^{l}
	\int_{\mathbb{C}}w^{c+l}\wb^{d+k}e^{-|w|^{m}}dA(w).
\end{split}
\end{equation}

We consider two different cases $a\geq b\geq 1,c\geq d\geq 1$ and 
$a\geq b\geq 1,1\leq c<d$. The other cases turn into one of these 
simply by conjugation.

First let us consider the case $a\geq b\geq 1$ and $c\geq d\geq 1$.  
Note that the first integral on the right hand side of \eqref{EqnBin1} 
is nonzero only if $k=a+l-b$, and the second integral in nonzero only 
if $k=c+l-d$. Then similarly to the monomial case \eqref{EqnProp1}, 
we can write the right hand side of \eqref{EqnBin1} as
\begin{equation*}
c_1m\sum_{l=0}^{\infty}d_{a-b+l}d_{l}z^{a-b+l}\zb^l \frac{1}{d_{a+l}}
+c_2m\sum_{l=0}^{\infty}d_{c-d+l}d_lz^{c-d+l}\zb^l \frac{1}{d_{c+l}}.
\end{equation*}
Hence if $f$ is a fixed point, then
\begin{equation}\label{EqnEqu1}
\begin{split}
	c_1\sum_{l=0}^{\infty}d_lz^{a+l}\zb^{b+l}
	+c_2\sum_{l=0}^{\infty} d_lz^{c+l}\zb^{d+l}
	=c_1\sum_{l=0}^{\infty}\frac{d_{a-b+l}d_l}{d_{a+l}}z^{a-b+l}\zb^l
	+c_2\sum_{l=0}^{\infty}\frac{d_{c-d+l}d_l}{d_{c+l}}z^{c-d+l}\zb^l. 
\end{split} 
\end{equation}
Let $a-b=j$ and $c-d=k$. We can write (\ref{EqnEqu1}) as
\begin{equation*} 
c_1\sum_{l=0}^{\infty}d_lz^{a+l}\zb^{b+l}
+c_2\sum_{l=0}^{\infty}d_lz^{c+l}\zb^{d+l} 
=c_1\sum_{l=0}^{\infty}\frac{d_{j+l}d_l}{d_{a+l}}z^{j+l}\zb^l
+c_2\sum_{l=0}^{\infty}\frac{d_{k+l}d_l}{d_{c+l}}z^{k+l}\zb^l.
\end{equation*}
Note that the zeroth terms in the sums on the right hand side 
above are holomorphic, while there is no holomorphic 
term on the left hand side. Hence if $j\neq k$ and $f$ is a fixed point, 
the zeroth terms on the right hand side must be zero, implying that
\begin{equation*}
c_{1}\frac{d_{j}d_{0}}{d_{a}}=0\quad  \text{ and } \quad 
c_{2}\frac{d_{k}d_{0}}{d_{c}}=0.
\end{equation*}
Since $d_{i}\neq 0$ for any $i\geq 0$, we can conclude 
that $c_{1}=c_{2}=0$. 

From now on we assume 
that $j=k$. Then, we take $B_{m}f=f$ with $a-b=c-d=j\geq 0$. Hence  
(\ref{EqnEqu1}) can be written as
\begin{equation}\label{EqnEqu2}
c_1\sum_{l=0}^{\infty}d_{l}z^{a+l}\zb^{b+l}
+c_2\sum_{l=0}^{\infty}d_{l}z^{c+l}\zb^{d+l}
=\sum_{l=0}^{\infty}d_{j+l}d_l
\left(\frac{c_{1}}{d_{a+l}}+\frac{c_{2}}{d_{c+l}}\right)z^{j+l}\zb^l. 
\end{equation}
Again the zeroth term on the right hand side above is holomorphic, 
while there is no holomorphic term on the left hand side. 
So for $B_{m}f=f$ to hold, we must assume that the holomorphic 
term on the right hand side is zero. This is equivalent to
\begin{equation}\label{EqnEqu3}
\frac{c_{1}}{d_{a}}+\frac{c_{2}}{d_{c}}=0.
\end{equation}
Writing $a=j+b$ and $c=j+d$, we can write (\ref{EqnEqu2}) as
\begin{equation}\label{EqnEqu4} 
c_1\sum_{l=0}^{\infty}d_lz^{j+b+l}\zb^{b+l}
+c_2\sum_{l=0}^{\infty}d_lz^{j+d+l}\zb^{d+l}
=\sum_{l=0}^{\infty}d_{j+l}d_l
\left(\frac{c_{1}}{d_{a+l}}+\frac{c_{2}}{d_{c+l}}\right)z^{j+l}\zb^l.
\end{equation}
Without loss of generality, we  assume that $d>b$ and we define 
$r=d-b\geq 1$. Recall that when $r=0$, $f$ is a monomial as $a-b=c-d$ and  
that was considered in Proposition \ref{PropMono}. Hence (\ref{EqnEqu4}) 
can be written as the following.
\begin{equation}\label{EqnEqu5} 
c_1\sum_{l=0}^{\infty}d_{l}z^{j+b+l}\zb^{b+l}
+c_2\sum_{l=0}^{\infty}d_{l}z^{j+b+r+l}\zb^{b+r+l}
=\sum_{l=0}^{\infty}d_{j+l}d_{l}
\left(\frac{c_{1}}{d_{a+l}}+\frac{c_{2}}{d_{c+l}}\right)z^{j+l}\zb^l.
\end{equation}
To get a better idea about the series above, we can write (\ref{EqnEqu5}) as
\begin{align*}
c_{1}d_{0}z^{j+b}\zb^{b}&+c_{1}d_{1}z^{j+b+1}\zb^{b+1}
+\cdot\cdot\cdot+c_{1}d_{r-1}z^{j+b+r-1}\zb^{b+r-1}\\
&+c_{1}\sum_{l=r}^{\infty}d_{l}z^{j+b+l}\zb^{b+l}+c_{2}
\sum_{l=0}^{\infty}d_{l}z^{j+b+r+l}\zb^{b+r+l}\\	
=d_{j}d_{0}&\left(\frac{c_{1}}{d_{a}}
+\frac{c_{2}}{d_{c}}\right)z^{j}+d_{j+1}d_{1}
\left(\frac{c_{1}}{d_{a+1}}+\frac{c_{2}}{d_{c+1}}\right)
z^{j+1}\zb+\cdot\cdot\cdot\\
&+d_{j+b+r-1}d_{b+r-1}\left(\frac{c_{1}}{d_{a+b+r-1}}
+\frac{c_{2}}{d_{c+b+r-1}}\right)z^{j+b+r-1}\zb^{b+r-1}\\
&+\sum_{l=b+r}^{\infty}d_{j+l}d_{l}
\left(\frac{c_{1}}{d_{a+l}}+\frac{c_{2}}{d_{c+l}}\right)
z^{j+l}\zb^{l}.
\end{align*}
Then we can rewrite it as
\begin{align*}
c_{1}d_{0}z^{j+b}\zb^{b}&+c_{1}d_{1}z^{j+b+1}\zb^{b+1}
+\cdot\cdot\cdot+c_{1}d_{r-1}z^{j+b+r-1}\zb^{b+r-1}\\
&+c_{1}\sum_{l=0}^{\infty}d_{r+l}z^{j+b+r+l}\zb^{b+r+l} 
+c_{2}\sum_{l=0}^{\infty}d_{l}z^{j+b+r+l}\zb^{b+r+l}\\
=d_{j}d_{0}&\left(\frac{c_{1}}{d_{a}}+\frac{c_{2}}{d_{c}}\right)z^{j}
+d_{j+1}d_{1}\left(\frac{c_{1}}{d_{a+1}}+\frac{c_{2}}{d_{c+1}}\right)
z^{j+1}\zb+\cdot\cdot\cdot\\
&+d_{j+b+r-1}d_{b+r-1}\left(\frac{c_{1}}{d_{a+b+r-1}}
+\frac{c_{2}}{d_{c+b+r-1}}\right)z^{j+b+r-1}\zb^{b+r-1}\\
&+\sum_{l=0}^{\infty}d_{j+b+r+l}d_{b+r+l}
\left(\frac{c_{1}}{d_{a+b+r+l}}+\frac{c_{2}}{d_{c+b+r+l}}\right)
z^{j+b+r+l}\zb^{b+r+l},
\end{align*}
where the sums on both the left and right hand sides are in terms 
of $z^{j+b+r+l}\zb^{b+r+l}$. Comparing the coefficients 
of $z^{j+b+r-1}\zb^{b+r-1}$ on both sides, we obtain
\begin{equation}\label{EqnEqu6}
c_{1}d_{r-1}=d_{j+b+r-1}d_{b+r-1}
\left(\frac{c_{1}}{d_{a+b+r-1}}+\frac{c_{2}}{d_{c+b+r-1}}\right)
\end{equation}
Comparing later terms, we must have
\begin{align*}
c_{1}d_{r+l}+c_{2}d_{l}
=d_{j+b+r+l}d_{b+r+l}
\left(\frac{c_{1}}{d_{a+b+r+l}}	+\frac{c_{2}}{d_{c+b+r+l}}\right)
\text{ for all }l\in \mathbb{N}_{0}.
\end{align*}

Considering (\ref{EqnEqu3}) and (\ref{EqnEqu6}), we will show that 
$c_{1}=c_{2}=0$. Equivalently, we would like to show that the 
following determinant 
\begin{equation*}
A_{m}=\det   
\left[ {\begin{array}{cc}
		\frac{1}{d_{a}} 
		& \frac{1}{d_{c}} \\
		\frac{d_{j+b+r-1}d_{b+r-1}}{d_{a+b+r-1}}-d_{r-1} 
		& \frac{d_{j+b+r-1}d_{b+r-1}}{d_{c+b+r-1}} \\
\end{array} } \right] 
\end{equation*}
is non-zero. Using $d_{l}=\frac{1}{\Gamma\left(\frac{2l+2}{m}\right)}$, 
we can write the determinant above as
\begin{align*}
A_{m}&=\frac{ d_{j+b+r-1}d_{b+r-1}}{d_{a}d_{c+b+r-1}}
-\frac{ d_{j+b+r-1}d_{b+r-1}}{d_{c}d_{a+b+r-1}}
+\frac{d_{r-1}}{d_{c}}\\
&=\frac{\Gamma\left(\frac{2a+2}{m}\right)\Gamma
	\left(\frac{2(c+b+r-1)+2}{m}\right)}{\Gamma\left(
	\frac{2(j+b+r-1)+2}{m}\right)\Gamma
	\left(\frac{2(b+r-1)+2}{m}\right)}
-\frac{\Gamma\left(\frac{2c+2}{m}\right)\Gamma
	\left(\frac{2(a+b+r-1)+2}{m}\right)}{\Gamma
	\left(\frac{2(j+b+r-1)+2}{m}\right)\Gamma
	\left(\frac{2(b+r-1)+2}{m}\right)}+\frac{\Gamma
	\left(\frac{2c+2}{m}\right)}{\Gamma\left(\frac{2(r-1)+2}{m}\right)}.
\end{align*}
Letting $x=\frac{2}{m}$, we get
\begin{equation*}
A_{m}=\frac{\Gamma((a+1)x)\Gamma((c+b+r)x)}{
	\Gamma((j+b+r)x)\Gamma((b+r)x)}
-\frac{\Gamma((c+1)x)\Gamma((a+b+r)x)}{
	\Gamma((j+b+r)x)\Gamma((b+r)x)}+\frac{\Gamma((c+1)x)}{\Gamma(rx)}.
\end{equation*}
One can show that the contribution of the first two terms is already
positive. Indeed,
\begin{equation*}
\frac{\Gamma((a+1)x)\Gamma((c+b+r)x)}{
	\Gamma((j+b+r)x)\Gamma((b+r)x)}
-\frac{\Gamma((c+1)x)\Gamma((a+b+r)x)}{\Gamma((j+b+r)x)\Gamma((b+r)x)}>0
\end{equation*}
if and only if 
\[\Gamma((a+1)x)\Gamma((c+b+r)x) -\Gamma((c+1)x)\Gamma((a+b+r)x)>0\]
which is equivalent to 
\begin{equation*}
\frac{\Gamma((a+1)x)\Gamma((c+b+r)x)}{\Gamma((a+b+c+r+1)x)}
>\frac{\Gamma((c+1)x)\Gamma((a+b+r)x)}{\Gamma((a+b+c+r+1)x)}. 
\end{equation*}  
However, using $(a+1)x+(c+b+r)x=(c+1)x+(a+b+r)x=k$ we have 
\begin{align*}
\beta((a+1)x,k-(a+1)x)
=&\frac{\Gamma((a+1)x)\Gamma((c+b+r)x)}{\Gamma((a+b+c+r+1)x)}\\
\beta((c+1)x,k-(c+1)x)
&=\frac{\Gamma((c+1)x)\Gamma((a+b+r)x)}{\Gamma((a+b+c+r+1)x)}. 
\end{align*} 
We recall that $a<c$. So we would like to show that
\begin{equation}\label{EqnEqu7}
\beta((a+1)x,k-(a+1)x)>\beta((c+1)x,k-(c+1)x).
\end{equation}
By Lemma \ref{LemBeta}, since $\beta(y,k-y)$ is convex with minimum at  
$k/2=(a+b+c+r+1)x/2$,  it is enough to show that 
\[(a+1)x< (c+1)x\leq \frac{k}{2}.\]
The first inequality above is clear as $a<c$. 
The second inequality is equivalent to 
\[c+1 \leq a+b+r=a+d\] 
as $r=d-b$. However, $a-b=c-d$. Then the inequality above is equivalent to 
\[a-b+1\leq a\]
which is correct as $b\geq 1$. Hence, the inequality \eqref{EqnEqu7} is satisfied
and $A_m$ is non-singular for all $m>0$. Therefore, $c_1=c_2=0$.  

We finish the proof by considering the second case $a\geq b\geq 1$ and 
$1\leq c<d$.  The first integral on the right hand side of \eqref{EqnBin1} 
is nonzero when $k=a-b+l$ and the second integral 
is nonzero when $l=d-c+k$. Therefore, \eqref{EqnBin1} can be written as
\begin{equation}\label{EqnBin2}
\begin{split}
	&c_{1} \sum_{k=0}^{\infty} d_{k}z^{a+k}\zb^{b+k}
	+c_{2} \sum_{k=0}^{\infty} d_{k}z^{c+k}\zb^{d+k}\\
	&=c_{1}\sum_{l=0}^{\infty}\frac{d_{a-b+l}d_{l}}{d_{a+l}}
	z^{a-b+l}\zb^{l}+c_{2}\sum_{k=0}^{\infty}
	\frac{d_{k}d_{d-c+k}}{d_{d+k}}z^{k}\zb^{d-c+k}.
\end{split}
\end{equation}
The zeroth terms in the sums on the right hand side are harmonic, 
while there is no harmonic term on the left hand side. 
Hence \eqref{EqnBin2} cannot hold unless $c_{1}=c_{2}=0$.
\end{proof}

\end{document}